\newcommand{\Q}{\mathbb{Q}}
\newcommand{\Z}{\mathbb{Z}}
\newcommand{\QQ}{\mathcal{Q}}
\newcommand{\E}{\mathcal{E}}
\newcommand{\F}{\mathcal{F}}
\newcommand{\I}{\mathcal{I}}
\newcommand{\calS}{\mathcal{S}}
\newcommand{\cd}{{\bf cd}}
\newcommand{\ctwod}{{\bf c}-2{\bf d}}
\renewcommand{\c}{{\bf c}}
\renewcommand{\d}{{\bf d}}
\def\FAA{\Q \kern .05em \langle y_1,y_2,\dots \rangle}
\def\NC{\Z \kern .05em \langle y_1,y_2,\dots \rangle}
\theoremstyle{plain}
\newtheorem{corollary}{Corollary}[section]
\newtheorem{theorem}{Theorem}[section]
\newtheorem{proposition}{Proposition}[section]
\newtheorem{definition}{Definition}[section]
\theoremstyle{remark}
\newtheorem{remark}{Remark}[section]
\newtheorem{example}{Example}[section]
\newcommand{\vanish}[1]{}
\begin{document}
\title[Quasisymmetric Functions and Enumeration]
{Peak Quasisymmetric Functions\\ and Eulerian Enumeration}
\author{Louis J.\ Billera}
\address{Department of Mathematics, Cornell University, Ithaca, NY 14853-4201}
\thanks{The first two authors were supported in part by NSF grants DMS-9800910
and DMS-0100323.  The third author was supported in part by the Leverhulme Trust.}
\author{Samuel K.\ Hsiao}
\author{Stephanie van Willigenburg}

\begin{abstract}
Via duality of Hopf algebras, there is a direct association between
peak quasisymmetric functions and enumeration of chains in Eulerian posets.
We study this association
explicitly, showing that the notion of $\cd$-index, long studied in the context
of convex polytopes and Eulerian posets, arises as the dual basis
to a natural basis of peak quasisymmetric functions introduced by Stembridge.
Thus Eulerian
posets having a nonnegative $\cd$-index (for example, face lattices of convex
polytopes) correspond to peak quasisymmetric functions having a nonnegative
representation in terms of this basis.  We diagonalize the operator
that associates the basis of descent sets for all quasisymmetric functions
to that of peak sets for the algebra of peak functions, and study the
$g$-polynomial for Eulerian posets as an algebra homomorphism.
\end{abstract}

\maketitle

\section{Introduction \label{intro}}

In the enumerative theory of partially ordered sets, one is often interested
in enumerative functionals that are nonnegative for a given class of posets.
Thus, for example, the {\em generalized lower bound theorem} for convex
polytopes asserts that certain functionals of the flag $f$-vector, the
so-called $g$-vector, will be nonnegative for all convex polytopes.

In recent years, there have been a number of papers linking the enumerative
theory of posets to the study of coalgebras and Hopf algebras, leading to
a deeper understanding of one such funtional, the $\cd$-index of Eulerian posets.
See \cite{Ag,BE,BER,BER2,Ehr,ER} for a sample of such work and \cite{BiBj}
for a relatively recent survey of the state of such enumerative questions.

In the theory of symmetric functions, one is often interested to know when
certain symmetric functions can be expressed as nonnegative linear combinations
of a preferred basis (the Schur functions, for example).  The recent
breakthrough of Haiman on the Macdonald positivity conjecture
\cite{Hai} is one such instance.

Setting questions in posets and symmetric functions in the context of
Hopf algebras has led to a deep understanding of their relationship.
In \cite{GKL}, Gel'fand, {\it et al.}, show the
Hopf algebra of quasisymmetric functions to be dual to the
the Hopf algebra $NC=\NC$, which they called {\em noncommutative symmetric
functions}.  Billera and Liu \cite{BL} considered elements of the algebra
$\FAA = \Q \otimes NC$ as flag-enumeration functionals on all graded
posets, and they defined a quotient $A_\E$ of $\FAA$, which consists of
all such functionals on Eulerian posets.
Bergeron, Mykytiuk, Sottile and van Willigenburg \cite{BMSV,BMSV2} showed
that the algebra $A_\E$ is dual to Stembridge's
algebra $\Pi$ of peak quasisymmetric functions \cite{Stem}.  More precisely,
they showed that both of these algebras have natural coproducts that make
them into Hopf algebras, and that these Hopf algebras are, in fact, dual.
This duality links the study of the enumerative properties of Eulerian posets,
including associated geometric objects such as convex polytopes and
hyperplane arrangements, with that of Stembridge's enriched $P$-partitions and
related questions having to do with peaks and shuffles in permutations.

We will explore some of these links here.  In particular, we will show
that the natural nonnegativity questions on each side are closely related.
The weight enumerators of all enriched $P$-partitions of chains
were shown by Stembridge to be a basis for the peak algebra $\Pi$.
An immediate consequence of the result of Bergeron,
{\it et al.}, is the fact that the formal quasisymmetric function $F(P)$ of an
Eulerian poset $P$, as defined by Ehrenborg \cite{Ehr}, is an element of
$\Pi$.  The coefficients of $F(P)$ in terms of this basis are given by
the $\cd$-index of $P$.  Thus nonnegative
representation for quasisymmetric functions of Eulerian posets is equivalent
to their having a nonnegative $\cd$-index.
More precisely, we show that the linear forms defining the coefficients
of the \ctwod-index give a basis for $A_\E$ dual to Stembridge's basis
for $\Pi$.  This completely unexpected result shows the
$\cd$-index to be a natural concept in spite of its initial {\it ad hoc}
definition.

We give the basic definitions in the rest of \S \ref{intro}.
In \S \ref{qsymmpeak} we define the algebra $\QQ$ of quasisymmetric
functions over $\Q$ and the subalgebra $\Pi$ of peak functions.  In
\S \ref{euler} we discuss graded and Eulerian posets and the algebras of
flag-enumeration functionals on each class.  In \S \ref{hopf} we define
the relevant coproducts on these algebras that make them pairs of dual
Hopf algebras.  Finally, in \S \ref{basissect}, we look at different
bases for $\QQ$ and corresponding representations.

In \S \ref{cdpeak}, we relate the representation of the quasisymmetric function
$F(P)$ in terms of Stembridge's
basis to the $\cd$-index of the poset $P$, in particular to the
\ctwod-index studied in \cite{BER}.  One consequence is that the quasisymmetric
functions corresponding to zonotopes lie in the (half) integral sublattice
of $\Pi$ spanned by the Stembridge basis.

In \S \ref{sigmasect} we consider the map $\vartheta$,
defined and studied by Stembridge,
associating the weight enumerator of all $P$-partitions for
a fixed labeled poset with that of the corresponding enriched $P$-partitions
for the same data.  When applied to a quasisymmetric function coming from a
representable geometric lattice, one obtains the quasisymmetric function
arising from the corresponding zonotope.  We show this map to be diagonalizable
on $\Pi$, and we give an explicit basis of eigenvectors.  The principal
eigenvector in any degree is given by the distribution of peak sets in the
corresponding symmetric group.  In fact, the operator
$\frac{1}{2^{n+1}}\vartheta$ can be viewed a giving a random walk
on the peak sets of $S_{n+1}$ having this stationary distribution.

Finally, in \S\ref{gsect}, we extend the usual
$g$-polynomial of Eulerian posets to the algebra $\Pi$ (in fact to $\QQ$),
where it defines an algebra homomorphism to the polynomial ring $\Q[x]$.
It is hoped that this way of viewing the $g$-invariant will lead to a better
understanding of its properties.

\subsection{Quasisymmetric functions and the peak algebra\label{qsymmpeak}}

We let $\QQ$ denote the algebra of
quasisymmetric functions over $\Q$, that is, all 
bounded degree formal power series
$F$ in variables $x_1,x_2,\dots$ such that for all $m$, and any
$i_1<i_2<\cdots < i_m$, the coefficient of $x_{i_1}x_{i_2}\cdots x_{i_m}$
in $F$ is the same as that of $x_1x_2\cdots x_m$.  Equivalently, $\QQ$
is the linear span of $M_0=1$ and all power series $M_\beta$, where
$\beta = (\beta_1,\beta_2,\dots,\beta_k)$ is a vector of positive
integers (a {\it composition} of $\beta_1+\beta_2+\cdots +\beta_k$) and
\begin{eqnarray}
M_\beta = \sum_{i_1<i_2<\cdots < i_k} x_{i_1}^{\beta_1}
x_{i_2}^{\beta_2}\cdots x_{i_k}^{\beta_k}. 
\label{mbeta}
\end{eqnarray}

We denote by $\QQ_{n+1}$ the subspace of $\QQ$ consisting of those
quasisymmetric functions that are homogeneous of degree $n+1$; equivalently,
$\QQ_{n+1}$ is the linear span of all $M_\beta$, where $\beta$ is a
composition of $n+1$.
It is straightforward to see that the $2^{n}$ such $M_\beta$
form a basis for the vector space $\QQ_{n+1}$.  For integer $k>0$,
let $[k]:=\{1,2,\dots,k\}$ and $[0]=\emptyset$. 
It will be helpful for us to consider the equivalent indexing of this
basis by subsets of $[n]$, where for
$S =\{i_1,i_2,\dots,i_k\} \subset [n]$,
$M_S:=M_{\beta(S)}$ and $\beta(S)=(i_1,i_2-i_1,\dots,i_k-i_{k-1},n+1-i_k)$.
When $n+1$ is not clear from the context, we will write $M_S^{(n+1)}$.
For further details about quasisymmetric functions, see \cite{ECII}.

\begin{definition}
Let $n\ge 0$ and $S\subset [n]$.
\begin{enumerate}
\item
$S$ is said to be {\em left sparse} if $1\notin S$
and $i\in S$ implies $i-1 \notin S$.  
\item
Similarly, $S$ is 
{\em right sparse} if $n\notin S$ and $i\in S$ implies $i+1\notin S$.
\item
For an integer $k$,
let $S+k = \{i+k \ | \ i\in S \}$.
\end{enumerate}
\end{definition}

We note that \cite{Read} uses the terms left and right sparse in the
opposite sense than used here.

The {\it peak algebra} $\Pi$ is defined to be the subalgebra of $\QQ$ generated
by the elements
\begin{eqnarray}
\Theta_S = \sum_{T:S\subset T \cup(T+1)} 2^{|T|+1}M_T,
\label{theta}
\end{eqnarray}
where $S$ is a left sparse subset of $[n]$, $n\ge 0$.
Here the sum is over $T\subset [n]$ and $M_T = M_T^{(n+1)}$.  
Defining $\Pi_n = \Pi \cap \QQ_n$, we have that $\dim_\Q(\Pi_n)=a_n$,
the $n^{th}$ Fibonacci number (indexed so that $a_1=a_2=1$)
\cite[Theorem 3.1]{Stem}.

We consider an equivalent indexing of the  basis of $\Pi$
to that by left sparse subsets in (\ref{theta}).
Let \c~  and \d~  be indeterminates, of degree 1 and 2, respectively.
For a $\cd$-word $w = \c^{n_1}\d\c^{n_2}\d\cdots\c^{n_k}\d\c^{m}$
of degree $n$, define the subset $S_w \subset [n]$ by
\begin{eqnarray*}
S_w &=& \{ n_1+2,n_1+n_2+4,\dots,n_1+n_2+\cdots+n_k+2k \} \\
&=& \{i_1,i_2,\dots,i_k\},
\label{Sw}
\end{eqnarray*}
where $i_j = \deg(\c^{n_1}\d\c^{n_2}\d\cdots\c^{n_j}\d)$.  Note that
$S_w$ is always left sparse and every left sparse $S\subset[n]$ is
of the form $S_w$ for some $\cd$-word $w$ of degree $n$.  Thus, there
will be no ambiguity if we relabel this basis to
\begin{eqnarray}
\Theta_w = \Theta_{S_w},
\label{thetaw}
\end{eqnarray}
where $w$ ranges over all possible $\cd$-words.
(For $w={\bf 1}$, the empty word, we have $\Theta_{\bf 1}=2M^1_\emptyset$.)
Note that
$\deg(\Theta_w)=\deg w + 1$,
so the ambiguity about the degree in the earlier notation is no longer an issue.

\subsection{Eulerian posets and enumeration algebras\label{euler}}
Recall that a {\it graded poset} $P$ is one having a unique minimal element
$\widehat 0$ and maximal element $\widehat 1$ for which every maximal
chain has the same number of elements.  Thus if $x\in P$ has
a maximal chain
$$\widehat 0=x_0<x_1<\cdots<x_k=x,$$
we say that $x$ has {\it rank} $k$, denoted $r(x)=k$ (and
so $r(\widehat 0) = 0$).  Further,
we define the rank of $P$ to be $r(P):=r(\widehat 1)$.
For a graded poset P of rank ${n+1}$
and a subset $S\subset [n]$, we denote by $f_S(P)$
the number of flags ({\it i.e.}, chains) in $P$ having elements with
precisely the ranks in $S$.  Note that the ranks $0$ and $n+1$ are not
included here.  The function $S \mapsto f_S(P)$ is known as the {\it flag
$f$-vector} of $P$.
Recall that a graded poset is said to be {\it Eulerian} if its M\"obius
function  $\mu$ satisfies $\mu(x,y)=(-1)^{r(y)-r(x)}$ for every pair $x\le y$.
See \cite{ECI} for general background in this area.

In \cite{BL}, elements of the free associative algebra $\FAA$ were
associated to flag numbers of graded posets.  If
$\beta = (\beta_1,\beta_2,\dots,\beta_k)$ is a composition of $n+1$,
let $y_\beta = y_{\beta_1} y_{\beta_2}\cdots y_{\beta_k}\in \FAA$.  We
associate $f_S$ for posets of rank $n+1$ to $y_{\beta(S)}$.
For $k\ge 1$, we define
$$\chi_{k} := \sum_{i+j=k}(-1)^{i}y_iy_j,$$
where the sum is over all $i,j \ge 0$ and we set $y_0=1$ for
convenience.  The element $\chi_k$ corresponds to the Euler relation
for rank $k$ posets.
Let $I_\E$ be the two-sided ideal in $\FAA$ generated
by the $\chi_k$, $k\ge 1$, and define the {\it algebra of
forms on Eulerian posets} to be $A_\E = \FAA/I_\E$.  Letting
$\deg(y_i) = i$, $\FAA$ is a graded algebra and, since $I_\E$ is a
homogeneous ideal, so is $A_\E$.  It is shown in \cite{BL} that
\begin{eqnarray}
A_{\E} \cong {\Q} \langle y_1, y_3, y_5, \dots  , y_{2k+1}, \dots  
\rangle
\label{iso2}
\end{eqnarray}
as graded $\Q$ algebras.  As a result, we have that
for $n \ge 1$, the dimension of  $(A_\E)_{n}$ is again $a_{n}$,
the $n$-th Fibonacci number.

\subsection{Coproducts and graded Hopf duality\label{hopf}}
Noting the equality of the dimensions of $\Pi_n$ and $(A_\E)_n$,
Bergeron, {\it et al.}, studied the relationship between them.  To do so,
they described coproducts on $\Pi$ and $A_\E$, respectively, that make
each a Hopf algebra  \cite{BMSV}.  The coproduct on
the subalgebra $\Pi$ is inherited from the usual coproduct on $\QQ$,
defined by $\Delta(M_0)=M_0\otimes M_0$ and
$$\Delta(M_\alpha) = \sum_{\alpha=\alpha_1\cdot \alpha_2} M_{\alpha_1} \otimes
M_{\alpha_2},$$
where $\alpha_1\cdot \alpha_2$ is the concatenation of compositions
$\alpha_1$ and $\alpha_2$, and either $\alpha_1$ or $\alpha_2$ may be the
empty composition of 0.  It was shown in \cite[Theorem 2.2]{BMSV2}
that $\Pi$ is closed under this coproduct.

There is a coproduct on $\FAA$ defined by
\begin{eqnarray}
\Delta(y_k) = \sum_{i+j=k}y_i \otimes y_j,
\label{faadelta}
\end{eqnarray}
where the sum is over all $i,j \ge 0$, which extends to all of
$\FAA$ by virtue of its being an algebra map.  In \cite{BMSV}, it
is shown that this coproduct is well-defined on the quotient $A_\E$.

With the augmentation map that is zero in positive degree and the identity
in degree 0, both $\QQ$ and $\FAA$ are bialgebras.
The existence of an antipode on each of these bialgebras, making them
Hopf algebras, follows from the fact that they are graded
(see, {\it e.g.},\cite[Lemma 2.1]{Ehr}).
More precisely, if $X$ has degree $n$, then
$$\Delta (X) = X\otimes 1 + \sum_{i=0} ^{n-1} Y_i\otimes Z_{n-i} ,$$
where $Y_j$ and $Z_j$ have degree $j$, and the antipode is defined
recursively by $s(1)=1$ and
\begin{eqnarray} 
s(X) = - \sum_{i=0} ^{n-1} s(Y_i)Z_{n-i}.
\label{antirec}
\end{eqnarray}

We can compute the antipode explicitly for $\Pi$ and $A_\E$. 
If we denote by $w^*$ the reverse of the $\cd$-word $w$,
{\it e.g.}, $(\c\c\d)^* = \d\c\c$,
then we have the following.
We delay the proof until \S\ref{thetareps}.

\begin{proposition} In terms of the basis $\{\Theta_w \}$,
the antipode of $\Pi$ is given by
$$s(\Theta _w) = (-1)^{\deg w+1}\Theta _{w^*}.$$
\label{pianti}
\end{proposition}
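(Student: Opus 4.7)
My plan is to induct on $n=\deg w$ using the recursive formula (\ref{antirec}) for the antipode in the graded Hopf algebra $\Pi\subset\QQ$. The base case $w={\bf 1}$ is immediate: $\Theta_{\bf 1}=2M_{\emptyset}^{(1)}=2(x_{1}+x_{2}+\cdots)$ is primitive of degree $1$, so (\ref{antirec}) gives $s(\Theta_{\bf 1})=-\Theta_{\bf 1}$, matching the claimed sign $(-1)^{0+1}\Theta_{{\bf 1}^{*}}$.

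The inductive step requires an explicit coproduct formula for $\Theta_w$ in the $\Theta$-basis. I would derive it in \S\ref{thetareps} by expanding (\ref{theta}) into the $M$-basis, applying the standard deconcatenation coproduct $\Delta(M_\alpha)=\sum_{\alpha=\alpha_{1}\cdot\alpha_{2}}M_{\alpha_{1}}\otimes M_{\alpha_{2}}$ of $\QQ$ termwise, and recombining. I expect a ``$\cd$-deconcatenation'' of the shape
\[
\Delta(\Theta_w)=\Theta_w\otimes 1+1\otimes\Theta_w+\sum_{(u,v)\in D(w)}\Theta_u\otimes\Theta_v,
\]
with $D(w)$ indexing the proper splittings of $w$ into two $\cd$-subwords, each cut inside a $\d$ producing two contributions that record the Euler-type identities encoded in $\Theta$. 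The key structural fact I intend to exploit is \emph{reversal symmetry}: the involution $w\mapsto w^{*}$ induces a bijection $D(w)\leftrightarrow D(w^{*})$ of the form $(u,v)\mapsto(v^{*},u^{*})$, so that $\Delta(\Theta_{w^{*}})$ is obtained from $\Delta(\Theta_w)$ by swapping tensor factors and applying $*$ on each.

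Rather than confront the resulting shuffle products $\Theta_{u^{*}}\Theta_v$ head-on after plugging the inductive hypothesis into (\ref{antirec}), I would pass to the dual side. By the Hopf duality of \cite{BMSV,BMSV2}, writing $\{\Phi_w\}\subset A_{\E}$ for the basis dual to $\{\Theta_w\}$ (identified in \S\ref{cdpeak} with the linear forms reading off coefficients of the $\cd$-index), the proposition is equivalent to $s(\Phi_w)=(-1)^{\deg w+1}\Phi_{w^{*}}$ in $A_{\E}$. In $\FAA$ the antipode is the anti-algebra endomorphism determined by $s(y_k)=\sum_{\beta\models k}(-1)^{\ell(\beta)}y_\beta$, a one-line consequence of (\ref{faadelta}) and (\ref{antirec}); this descends to $A_{\E}$ via (\ref{iso2}), and the anti-algebra property reverses a $\cd$-monomial read in the odd generators $y_{1},y_{3},\ldots$, while a parity count on factors yields the sign $(-1)^{n+1}$. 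The main obstacle is the compatibility check---that the reversal $w\mapsto w^{*}$ on $\cd$-words is genuinely intertwined, via (\ref{iso2}) and modulo the Euler ideal $I_{\E}$, with the reversal of $y$-monomials in $A_{\E}$---which I would settle combinatorially at the start of \S\ref{thetareps} before reaping the antipode formula from the duality.
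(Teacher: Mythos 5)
Your overall plan---pass to the dual Hopf algebra $A_\E$, where the antipode is simple, and transport the formula back to $\Pi$---is legitimate and not circular (Theorem \ref{cdtheta} does not use the antipode), but as written the essential content of the proof is missing. The step you defer as ``the main obstacle'' is precisely the heart of the proposition, and your sketch of it conflates two indexings: under (\ref{iso2}) the basis of $A_\E$ dual to $\{\Theta_w\}$ is \emph{not} a family of monomials in the odd generators $y_1,y_3,\dots$; it consists of the \ctwod-coefficient forms, whose expression in the $y_\beta$ is the content of Proposition \ref{ksw} and Theorem \ref{cdtheta}. So knowing $s(y_\beta)=(-1)^{n+1}y_{\beta^*}$ on $A_\E$ and counting odd factors does not by itself tell you how $s$ permutes your $\Phi_w$. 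What you actually need is either the identification of Theorem \ref{cdtheta} combined with the $\cd$-index duality $[w]_{P^*}=[w^*]_P$ of (\ref{cddual}), or, more directly, the observation that the adjoint of $y_\beta\mapsto(-1)^{n+1}y_{\beta^*}$ acts on flag coefficients by $f_S\mapsto(-1)^{n+1}f_{S^\vee}$ together with the combinatorial fact $(\I^w)^\vee=\I^{w^*}$, which converts the $M$-expansion (\ref{thetab}) of $\Theta_w$ into that of $\Theta_{w^*}$. Neither is established in your writeup, and your opening inductive scheme via a conjectured $\cd$-deconcatenation coproduct for $\Theta_w$ is abandoned rather than carried out, so the proposal remains a plan rather than a proof.

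For comparison, the paper stays inside $\QQ$: since the antipode of $\Pi$ is just the antipode of $\QQ$ restricted to $\Pi$, one applies the Malvenuto--Reutenauer formula $s(F_T)=(-1)^{n+1}F_{\overline{T}^\vee}$ to the $F$-basis expansion $\Theta_w=2^{|w|_\d+1}\sum_{T,\overline{T}\in b[\I^w]}F_T$ of Proposition \ref{thetaf}, and then uses $(\I^w)^\vee=\I^{w^*}$ to reindex the sum; this gives the result in a few lines. If you complete your dual-side argument you will in effect be re-deriving this same reversal fact, but with the extra overhead of identifying the dual basis; I suggest either supplying that intertwining argument explicitly or switching to the direct computation in the $F$- or $M$-basis.
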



Recall that if $\beta = (\beta_1,\dots,\beta_k)$ is a composition of $n+1$,
then we denote $y_\beta = y_{\beta_1}\cdots y_{\beta_k} \in\FAA$.  If
$\beta^* = (\beta_k,\dots,\beta_1)$ is the reverse composition, then we have

\begin{proposition} In terms of the basis $\{y_\beta\}$,
the antipode of $A_\E$ is given by
$$s(y_\beta) =  (-1)^{n+1} y_{\beta^*},$$
where $\beta$ is a composition of $n+1$.
\end{proposition}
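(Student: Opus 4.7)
The plan is to reduce to the single-generator case and then use the antihomomorphism property of the antipode. Because $A_\E$ is a graded connected Hopf algebra, its antipode satisfies $s(ab)=s(b)s(a)$ automatically, so applied to $y_\beta = y_{\beta_1}y_{\beta_2}\cdots y_{\beta_k}$ we obtain
\[
s(y_\beta) \;=\; s(y_{\beta_k})\, s(y_{\beta_{k-1}})\,\cdots\, s(y_{\beta_1}).
\]
If we can show $s(y_j) = (-1)^j y_j$ in $A_\E$ for every positive integer $j$, then the signs multiply to $(-1)^{\beta_1 + \cdots + \beta_k} = (-1)^{n+1}$ and the factors $y_{\beta_j}$ reassemble into $y_{\beta^*}$, yielding the stated formula.

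To establish this one-variable identity in $A_\E$, I proceed by induction on $k$. The cases $k=0,1$ are immediate from the recursion (\ref{antirec}) together with $\Delta(y_1) = y_1\otimes 1 + 1\otimes y_1$. Assuming $s(y_i) = (-1)^i y_i$ in $A_\E$ for all $i<k$, feeding the coproduct (\ref{faadelta}) into (\ref{antirec}) and applying the inductive hypothesis gives
\[
s(y_k) \;=\; -\sum_{i=0}^{k-1} s(y_i)\, y_{k-i} \;=\; -\sum_{i=0}^{k-1} (-1)^i y_i y_{k-i}.
\]
The right-hand side is exactly $\chi_k$ with its top ($i=k$) term removed and negated; since $\chi_k = 0$ in $A_\E$ by definition of the Euler ideal $I_\E$, the sum collapses to $-(-1)^k y_k \cdot (-1) = (-1)^k y_k$, closing the induction.

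The one substantive step is this inductive collapse. In $\FAA$ itself the antipode values $s(y_k)$ admit no such clean closed form; only after reducing modulo $I_\E$ does the recursive expression coincide with $\chi_k$ minus its top term, which vanishes. Once this observation is in hand, the rest of the argument is formal: the antihomomorphism property is an automatic feature of any Hopf algebra, and the final sign $(-1)^{n+1}$ is simply the product of the individual generator signs across the composition $\beta$.
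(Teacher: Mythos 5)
Your proof is correct and follows essentially the same route as the paper: an induction showing $s(y_k)=(-1)^k y_k$ in $A_\E$ via the recursion for the antipode and the vanishing of $\chi_k$, followed by the anti-homomorphism property of $s$ to handle a general $y_\beta$ and produce the sign $(-1)^{n+1}$ together with the reversal $\beta^*$. No gaps; nothing further to add.
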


\begin{proof}
We show first that $s(y_n) = (-1)^n y_n$.
By (\ref{faadelta}) and (\ref{antirec}) we have $s(y_1)=-y_1$. By induction,
$$s(y_n)=- \left(   \sum _{i=0}^{n-1} (-1)^i y_iy_{n-i}\right) 
= - \left( \chi _n - (-1)^n y_n \right).$$
The assertion follows since $\chi _n$ vanishes in $A_\E$.

The proposition now follows from the fact that the antipode is an algebra
anti-homomorphism.
\end{proof}

A key result for our purposes is that $A_\E$ and
$\Pi$ are dual as graded Hopf algebras \cite[Theorem 5.4]{BMSV}.  By dual
we will always mean {\em graded dual}; that is, if a graded algebra
is of the form $V=V_0\oplus V_1 \oplus \cdots$ as a graded vector space,
then its graded dual is, as a vector space, $V^*=V^*_0\oplus V^*_1 \oplus \cdots$,
where $V^*_i$ is the usual dual space to the finite dimensional space $V_i$.

Thus,
we have that elements of $\Pi$ are characterized by having coefficients
that satisfy the generalized Dehn-Sommerville equations for Eulerian posets
\cite{BB}.

\begin{proposition}
If $F=\sum_{S\subset [n]} f_S \thinspace M_S \in \QQ_{n+1}$, then $F\in \Pi$
if and only if $\sum_{S\subset [n]} a_S \thinspace f_S = 0$ whenever
$\sum_{S\subset [n]} a_S \thinspace y_{\beta(S)} \in I_\E$. 
\label{DSQ}
\end{proposition}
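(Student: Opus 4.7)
The plan is to deduce the statement from the graded Hopf duality between $A_\E$ and $\Pi$ cited just above as \cite[Theorem 5.4]{BMSV}, together with the fact that $\QQ$ and $\FAA$ are graded duals via the pairing in which $\{M_S\}$ and $\{y_{\beta(S)}\}$ are dual bases.

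First I would fix the perfect pairing $\langle \cdot, \cdot \rangle \colon \QQ_{n+1} \times \FAA_{n+1} \to \Q$ determined by $\langle M_S, y_{\beta(T)}\rangle = \delta_{S,T}$, so that under this pairing $\QQ_{n+1}$ is identified with the graded dual of $\FAA_{n+1}$. Applying graded duality to the short exact sequence
\[
0 \to (I_\E)_{n+1} \to \FAA_{n+1} \to (A_\E)_{n+1} \to 0
\]
then identifies $(A_\E)_{n+1}^*$ with the annihilator
\[
(I_\E)_{n+1}^\perp := \{\,F \in \QQ_{n+1} : \langle F, z \rangle = 0 \text{ for all } z \in (I_\E)_{n+1}\,\}
\]
inside $\QQ_{n+1}$.

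Second, I would invoke the Bergeron--Mykytiuk--Sottile--van Willigenburg duality to identify $\Pi_{n+1}$ with $(A_\E)_{n+1}^*$. Since this duality is obtained precisely by viewing the basis $\{y_{\beta(S)}\}$ of $\FAA$ and the basis $\{M_S\}$ of $\QQ$ as mutually dual, the copy of $(A_\E)_{n+1}^*$ sitting inside $\QQ_{n+1}$ coincides with $\Pi_{n+1}$. Combining the two identifications yields $\Pi_{n+1} = (I_\E)_{n+1}^\perp$.

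Finally I would unwind the annihilator condition on $F = \sum_S f_S M_S$: for $z = \sum_S a_S y_{\beta(S)} \in (I_\E)_{n+1}$ one computes $\langle F, z\rangle = \sum_S a_S f_S$, so $F \in \Pi_{n+1}$ if and only if $\sum_S a_S f_S = 0$ for every such $z$, which is the claimed characterization. The only delicate point, and the step I would want to pin down carefully, is checking that the Hopf-duality pairing used in \cite{BMSV} agrees on the nose with the pairing $\langle M_S, y_{\beta(T)}\rangle = \delta_{S,T}$ used to identify $\QQ$ with $\FAA^*$; once this compatibility is confirmed, everything else is formal linear-algebraic duality and the proposition follows.
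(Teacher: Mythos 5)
Your proposal is correct and follows essentially the same route as the paper, which states Proposition \ref{DSQ} as a direct consequence of the graded Hopf duality of $A_\E$ and $\Pi$ from \cite[Theorem 5.4]{BMSV}; your write-up simply makes explicit the linear-algebra step of dualizing $0\to (I_\E)_{n+1}\to \FAA_{n+1}\to (A_\E)_{n+1}\to 0$ against the pairing $\langle M_S, y_{\beta(T)}\rangle=\delta_{S,T}$ to realize $\Pi_{n+1}$ as the annihilator of $(I_\E)_{n+1}$. The compatibility you flag (that the BMSV duality is induced by this standard pairing) is indeed how that duality is set up, so your argument matches the paper's intended justification.
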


If $P$ is any graded poset of rank $n+1$, then following \cite{Ehr} we
define the formal quasisymmetric function associated to $P$ by
\begin{eqnarray}
F(P) = \sum_{S\subset [n]} f_S(P)\thinspace M_S \in \QQ_{n+1}.
\label{EQS}
\end{eqnarray}
Then it follows from Proposition \ref{DSQ} that the quasisymmetric
functions of Eulerian posets are elements of $\Pi$.  However, the converse
does not hold; it is possible for a graded poset $P$ not to be Eulerian,
yet still satisfy $F(P) \in \Pi$.  The smallest such example has
$f_\emptyset=1$, $f_1=f_2=3$ and $f_{12}=6$.

\subsection{Bases and interval representations\label{basissect}}

It will be helpful to consider two other bases for $\QQ$ and the
corresponding representations of arbitrary $F\in \QQ$.
For $S\subset [n]$, we define
\begin{eqnarray}
F_S = \sum_{T \supset S} M_T
\end{eqnarray}
and
\begin{eqnarray}
K_S = \sum_{T \supset S} F_T = \sum_{T \supset S}2^{|T|-|S|} M_T.
\label{KS}
\end{eqnarray}
Again all sums are over $T\subset [n]$ and $M_T = M_T^{(n+1)}$; when the
context does not make it clear we will write $F_S^{(n+1)}$ and $K_S^{(n+1)}$.
It is
easy to check that the $F_S$ and $K_S$ are again bases for $\QQ_{n+1}$
and that 
\begin{eqnarray}
M_S = \sum_{T \supset S}(-1)^{|T|-|S|} \thinspace F_T
\end{eqnarray}
and
\begin{eqnarray}
F_S = \sum_{T \supset S} (-1)^{|T|-|S|} \thinspace  K_T.
\end{eqnarray}

Define the {\it flag $h$-vector} and {\it flag $k$-vector} by the relations
$f_S = \sum_{T \subset S} h_T$ and $h_S = \sum_{T \subset S} k_T$.
The following is immediate from the definitions.

\begin{proposition}
For $F \in \QQ_{n+1}$, if
$F= \sum_{S\subset [n]} f_S\thinspace M_S$ then
\begin{eqnarray*}
F = \sum_{S\subset [n]} h_S\thinspace F_S
= \sum_{S\subset [n]} k_S\thinspace K_S.
\end{eqnarray*}
\label{basisprop}
\end{proposition}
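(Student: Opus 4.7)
The proof is an essentially mechanical interchange of summations, so the plan is to simply substitute the defining relations of the flag $h$-vector and flag $k$-vector into $F = \sum_S f_S M_S$ and reindex.

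First I would handle the $F_S$ expansion. Starting from $F = \sum_{S\subset[n]} f_S\, M_S$ and substituting $f_S = \sum_{T\subset S} h_T$, I swap the order of summation to obtain
$$F = \sum_{S\subset[n]}\sum_{T\subset S} h_T\, M_S = \sum_{T\subset[n]} h_T \sum_{S\supset T} M_S = \sum_{T\subset[n]} h_T\, F_T,$$
where the final equality is just the definition of $F_T$.

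Next I would repeat the same maneuver one level up. Taking the identity just proved, $F = \sum_S h_S\, F_S$, and substituting $h_S = \sum_{T\subset S} k_T$, I again interchange to get
$$F = \sum_{S\subset[n]}\sum_{T\subset S} k_T\, F_S = \sum_{T\subset[n]} k_T \sum_{S\supset T} F_S = \sum_{T\subset[n]} k_T\, K_T,$$
by the definition of $K_T$ in (\ref{KS}). Alternatively, one could substitute $f_S = \sum_{T\subset S} h_T = \sum_{T\subset S}\sum_{R\subset T} k_R = \sum_{R\subset S} (\text{number of } T \text{ with } R\subset T\subset S)\, k_R$ directly; but the two-step route is cleaner.

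There is essentially no obstacle: the identity $f_S = \sum_{T\subset S} h_T$ is Möbius-inverted by $h_S = \sum_{T\subset S}(-1)^{|S\setminus T|} f_T$, so the coefficients $h_S$ and $k_S$ are uniquely determined, and since $\{M_S\}$, $\{F_S\}$, and $\{K_S\}$ are all bases of $\QQ_{n+1}$, the above computations establish the claimed expansions. The only point meriting any care is the bookkeeping in the double sums, which is standard.
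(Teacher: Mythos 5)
Your proposal is correct and matches the paper's intent exactly: the paper simply declares the proposition ``immediate from the definitions,'' and your substitution of $f_S=\sum_{T\subset S}h_T$ (respectively $h_S=\sum_{T\subset S}k_T$) followed by an interchange of summation, using $F_T=\sum_{S\supset T}M_S$ and $K_T=\sum_{S\supset T}F_S$, is precisely the routine computation being elided. No gaps.
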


Note that Proposition \ref{basisprop} holds, more specifically, for
a graded poset $P$ of rank $n+1$:
if $F(P)=\sum_{S\subset [n]} f_S(P) \thinspace M_S \in \QQ_{n+1}$
then $F(P) = \sum_{S\subset [n]} h_S(P)\thinspace F_S = 
\sum_{S\subset [n]} k_S(P)\thinspace K_S$, where
$f_S(P) = \sum_{T \subset S} h_T(P)$ and
$h_S(P) = \sum_{T \subset S} k_T(P)$.

\medskip
If $\I$ is a family of subsets of $[n]$, the we denote by $b[\I]$
the {\it blocking family} of $\I$, defined by
\begin{eqnarray*}
b[\I] = \{ S\subset [n] \ | \ S\cap I \not= \emptyset \hbox{\rm ~for all~}
I\in \I \}.
\end{eqnarray*}
We note that if $\I$ is an antichain in the Boolean lattice $2^{[n]}$, then
we can recover $\I$ as the set of minimal elements, under inclusion,
in $b[b[\I]]$.

We are particularly interested in the case in which the family $\I$ consists
of {\em intervals} in $[n]$, {\em i.e.}, subsets of the form
$\{ i, i+1, \dots, i+k\}$.  If $\I$ is such an interval family, we denote by
$F_\I$ the element of $\QQ_{n+1}$ defined by

\begin{eqnarray}
F_\I = \sum_{\stackrel{\scriptstyle S\subset [n]}{S\in b[\I]}} M_S.
\label{FI}
\end{eqnarray}

We call the $F_\I$ {\em interval quasisymmetric functions}.
For $S\subset [n]$, if we set $\I = \I(S) = \{\ \{i\} \ | \ i\in S \}$
then $F_{\I(S)} = F_S$.  We will see in the next section that the basis
for $\Pi$ can be represented in a similar manner.

In \cite{BH}, antichains of intervals
were used to describe the extreme rays of the closed convex cone
generated by all flag $f$-vectors of graded posets.  Equivalently,
the same description can be used to describe the closed convex cone
in $\QQ$ generated by all $F(P)$ arising from graded posets.  The following
is essentially \cite[Theorem 2.1]{BH}.
                                                                    
\begin{proposition}
The extreme rays of the closed convex cone in $\QQ_{n+1}$ generated by
all elements $F(P)$, where $P$ is a graded poset of rank $n+1$, are
precisely the interval quasisymmetric functions $F_\I$
corresponding to interval antichains in $[n]$.
\end{proposition}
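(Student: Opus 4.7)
The plan is to deduce the proposition from \cite[Theorem 2.1]{BH} by observing that, in the basis $\{M_S^{(n+1)}\}$, the coefficients of $F(P)$ are exactly the flag $f$-vector of $P$, so the two cones (on the quasisymmetric side and on the flag $f$-vector side) are linearly isomorphic.

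More precisely, the first step is to note that the $2^n$ functions $M_S^{(n+1)}$, $S \subset [n]$, form a basis of $\QQ_{n+1}$, so the coordinate map
$$\Phi \colon \QQ_{n+1} \longrightarrow \R^{2^{n}}, \qquad \sum_{S\subset[n]} a_S\thinspace M_S^{(n+1)} \longmapsto (a_S)_{S\subset[n]},$$
is a linear isomorphism. By the definition (\ref{EQS}) of $F(P)$, we have $\Phi(F(P)) = (f_S(P))_{S\subset[n]}$, the flag $f$-vector of $P$. Since a linear isomorphism carries closed convex cones to closed convex cones and induces a bijection on extreme rays, describing the extreme rays of the cone in $\QQ_{n+1}$ generated by $\{F(P)\}$ is equivalent to describing the extreme rays of the closed cone in $\R^{2^n}$ generated by flag $f$-vectors of rank-$(n+1)$ graded posets.

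The second step is simply to invoke \cite[Theorem 2.1]{BH}: that result asserts that the extreme rays of this flag $f$-vector cone are spanned precisely by the $0/1$-vectors $v_\I$ indexed by antichains $\I$ of intervals in $[n]$, where $v_\I(S) = 1$ if $S \in b[\I]$ and $v_\I(S) = 0$ otherwise. Pulling back through $\Phi^{-1}$, the extreme rays of the cone in $\QQ_{n+1}$ are spanned by
$$\Phi^{-1}(v_\I) \;=\; \sum_{S \in b[\I]} M_S^{(n+1)} \;=\; F_\I,$$
by the defining formula (\ref{FI}). This is exactly the claim.

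There is no real obstacle: the combinatorial content — producing graded posets whose flag $f$-vectors realize (positive multiples of) each $v_\I$, and showing that no other extreme rays exist — is exactly what \cite[Theorem 2.1]{BH} provides. The only thing to verify here is the dictionary between the two settings, which is transparent because the $M_S$-basis and the flag $f$-vector share the same indexing by subsets $S \subset [n]$ and the map $\Phi$ is a linear isomorphism.
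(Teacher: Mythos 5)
Your proposal is correct and takes essentially the same route as the paper: the paper also obtains this proposition directly as a translation of \cite[Theorem 2.1]{BH}, using the identification of the coefficients of $F(P)$ in the $M_S$-basis with the flag $f$-vector, so that the cone of quasisymmetric functions of graded posets is linearly isomorphic to the cone of flag $f$-vectors and the extreme rays $F_\I$ correspond to the interval-antichain rays there. Your write-up merely makes the dictionary explicit, which the paper leaves as an immediate remark.
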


Finally, we note that one can interpret the chain decompositions of
\cite{BH,BH2} as giving multiplication formulae for the $F_\I$.  In
particular, the proof of \cite[Proposition 2.8]{BH} yields the expression
\begin{eqnarray}
F(P)=\sum_C \sum_{S\in b[\I(C)]} M_S = \sum_C F_{\I(C)},
\label{chaindecomp}
\end{eqnarray}
where the first sum is over all maximal chains $C$ in $P$ and $\I(C)$ is
the interval antichain defined in \cite[p. 86]{BH}.  As in
\cite[Corollary 2.6]{BH}, we have $F_\I$ is the limit, as
$N \rightarrow \infty$, of elements
of the form $\frac{1}{f_{[n]}(P_N)}F(P_N)$, and one can use 
(\ref{chaindecomp}) to compute the representation of
$F_{\I_1} \cdot F_{\I_2}$ in terms of the $F_\I$.
                                                                                                                 
\section{The $\cd$-index and the peak algebra \label{cdpeak}}

Now suppose $P$ is an Eulerian poset of rank $n+1$ and
$F(P) = \sum_{S\subset [n]} f_S \thinspace M_S$.  We
wish to express $F(P)$ in terms of the basis $\{ \Theta_w\}$
for $\Pi_{n+1}$.  An unexpected outcome is that such a representation
is provided by the $\cd$-index of $P$.

\subsection{Blocking representations of $\Theta_w$\label{thetareps}}
We begin by giving a representation of the basis $\Theta_w$ in terms of
interval families associated with sparse subsets.

\begin{definition}
Let $S=\{ i_1,\dots,i_k\}\subset [n]$ and $w$ a $\cd$-word of degree $n$.
Then
\begin{enumerate}
\item
if $S$ is right sparse, let
\begin{eqnarray*}
\I_S = \{ \{i_1,i_1+1\},\{i_2,i_2+1\},\dots,\{i_k,i_k+1\}\},
\end{eqnarray*}
\item
if $S$ is left sparse, let
\begin{eqnarray*}
\I^S = \{ \{i_1-1,i_1\},\{i_2-1,i_2\},\dots,\{i_k-1,i_k\}\} \hbox{~ and}
\end{eqnarray*}
\item
$\I^w = \I^{S_w}$
\end{enumerate}
\end{definition}
When defined, both $\I_S$ and $\I^S$ are antichains of disjoint
two-element intervals in $[n]$.  The
interval antichains $\I_S$ and $\I^S$ are among what Bayer and Hetyei
refer to as {\it even interval systems} and so give rise (after their
doubling operation) to limits of flag $f$-vectors of Eulerian posets
\cite[Proposition 2.6]{BaH}.
We show that in this way $\I^w$ will give rise to $\Theta_w$. 

It is straightforward to see that for a degree $n$ $\cd$-word $w$ and
subset $S\subset [n]$, $S_w \subset S\cup(S+1)$ if and only if
$S\in b[\I^w]$.  Thus it follows from (\ref{theta}) and (\ref{thetaw})
that
\begin{eqnarray}
\Theta_w = \sum_{S\in b[\I^w]} 2^{|S|+1} M_S.
\label{thetab}
\end{eqnarray}
If we define the map $D:\QQ_{n+1} \longrightarrow \QQ_{n+1}$ by
$D(M_S) = 2^{|S|+1} M_S$, then (\ref{thetab}) is equivalent to
$$\Theta_w = D\left(F_{\I^w}\right),$$
where $F_{\I^w}$ is defined by (\ref{FI}).

It follows from \cite[Corollary 2.6]{BH} and the remark following
\cite[Definition 4]{BaH} that $\frac1 2 \Theta_w$ is the quasisymmetric
function corresponding to what Bayer and Hetyei call the doubled limit
poset $DP(n,\I^w)$.
From \cite[Theorem 4.2]{BaH} we obtain

\begin{proposition}
The $\Theta_w$ are among the extreme rays of the closed convex cone
in $\QQ$ generated by all $F(P)$ arising from Eulerian posets.
\label{extreme}
\end{proposition}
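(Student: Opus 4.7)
The approach is to reduce directly to Bayer--Hetyei's extremality theorem \cite[Theorem 4.2]{BaH}, using the identification noted just before the statement, namely $\tfrac12\Theta_w = F(DP(n,\I^w))$, where $DP(n,\I^w)$ is the doubled limit poset attached to $\I^w$. The first step is to confirm that $\Theta_w$ actually belongs to the closed cone generated by $\{F(P):P\text{ Eulerian of rank }n+1\}$. As already remarked above, the left-sparseness of $S_w$ makes $\I^w$ a disjoint antichain of two-element intervals in $[n]$ (consecutive $i_j<i_{j+1}$ satisfy $i_{j+1}\geq i_j+2$, and $i_1\geq 2$), hence an even interval system in the sense of \cite{BaH}. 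By \cite[Proposition 2.6]{BaH}, combined with \cite[Corollary 2.6]{BH}, the quasisymmetric function $F(DP(n,\I^w))$ arises as a limit $\lim_{N\to\infty}\frac{1}{f_{[n]}(P_N)}F(P_N)$ for some sequence $\{P_N\}$ of rank-$(n+1)$ Eulerian posets; multiplying by $2$, this places $\Theta_w$ in the required closed cone.

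For extremality I would transport the cone structure back to flag $f$-vector space via the linear map
\[
\Phi\colon \R^{2^{[n]}}\longrightarrow \QQ_{n+1},\qquad
(a_S)_{S\subset[n]}\longmapsto \sum_{S\subset[n]} a_S\,M_S^{(n+1)}.
\]
Since the $M_S^{(n+1)}$ form a basis of $\QQ_{n+1}$, the map $\Phi$ is a linear isomorphism; by construction it sends the flag $f$-vector $(f_S(P))_S$ of an Eulerian poset $P$ of rank $n+1$ to $F(P)$. Consequently $\Phi$ restricts to a linear bijection from the closed cone of flag $f$-vectors of rank-$(n+1)$ Eulerian posets onto the closed cone generated by $\{F(P)\}$, and this bijection preserves extreme rays in both directions. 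The fact that $DP(n,\I^w)$ gives an extreme ray of the former cone is precisely \cite[Theorem 4.2]{BaH}, so $F(DP(n,\I^w))$---and hence $\Theta_w$, which spans the same ray---is an extreme ray of the latter.

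Because the proof is essentially a translation, the main obstacle is purely bookkeeping: verifying that $\I^w$ meets the hypotheses of \cite[Theorem 4.2]{BaH} (immediate from left-sparseness of $S_w$), and that $\Phi$ genuinely transports extreme rays of closed cones (immediate from being a linear isomorphism). All the substantive content lives in the cited Bayer--Hetyei theorem.
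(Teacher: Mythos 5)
Your proposal is correct and follows essentially the same route as the paper: identify $\tfrac12\Theta_w$ with $F(DP(n,\I^w))$ via \cite[Corollary 2.6]{BH} and the even-interval-system observation, then invoke \cite[Theorem 4.2]{BaH}, transporting extremality through the linear isomorphism $f_S\mapsto M_S$. The paper leaves that last transport step implicit, so your spelling it out is fine but adds nothing beyond the intended argument.
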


It will be helpful in what follows to have a representation of the
$\Theta_w$ in terms of the basis $\{ F_T \}$ of $\QQ$.  We let
$|w|_\d$ denote the \d-degree of the word $w$, {\it i.e.}, the number
of \d's in $w$.  The following is essentially \cite[Proposition 3.5]{Stem}.

\begin{proposition}
For any $\cd$-word $w$ of degree $n$,
\begin{eqnarray*}
\Theta_w = 2^{|w|_\d + 1} \sum_{T,\overline{T} \in b[\I^w]} F_T,
\end{eqnarray*}
where the sum is over all $T\subset [n]$, and $\overline{T} = [n] \setminus T$.
\label{thetaf}
\end{proposition}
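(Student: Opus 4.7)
The plan is to prove this by expanding both sides in the $\{M_U\}$ basis and comparing coefficients. We already know from (\ref{thetab}) that $\Theta_w = \sum_{S\in b[\I^w]} 2^{|S|+1}\, M_S$, so the goal reduces to showing that when we expand $2^{|w|_\d+1}\sum_{T,\overline T\in b[\I^w]} F_T$ via $F_T=\sum_{U\supset T} M_U$ and swap the order of summation, the coefficient of $M_U$ comes out to $2^{|U|+1}$ for $U\in b[\I^w]$ and to $0$ otherwise.

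After swapping, the coefficient of $M_U$ on the right-hand side is
\begin{equation*}
2^{|w|_\d+1}\cdot N(U),\qquad N(U):=\#\{T\subset U : T\in b[\I^w]\text{ and }\overline T\in b[\I^w]\}.
\end{equation*}
If $U\notin b[\I^w]$, pick some $I_j\in\I^w$ with $U\cap I_j=\emptyset$; then any $T\subset U$ satisfies $T\cap I_j=\emptyset$, so $N(U)=0$, matching the fact that $M_U$ does not appear in (\ref{thetab}). The heart of the proof is to show that when $U\in b[\I^w]$,
\begin{equation*}
N(U)=2^{|U|-|w|_\d}.
\end{equation*}

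To establish this count, I would exploit the structure of $\I^w=\{I_1,\dots,I_k\}$ (where $k=|w|_\d$) as an antichain of pairwise disjoint two-element intervals. The conditions $T\in b[\I^w]$ and $\overline T\in b[\I^w]$ together force $T\cap I_j$ to be a nonempty proper subset of $I_j$, i.e.\ a singleton, for each $j$. Write $J=\bigcup_j I_j$ and stratify $T$ according to the parts $T\cap J$ and $T\cap([n]\setminus J)$. For each $j$, if both elements of $I_j$ lie in $U$ there are $2$ valid singletons $T\cap I_j$, while if exactly one lies in $U$ (the remaining case when $U\in b[\I^w]$) the singleton is forced; the elements of $T$ outside $J$ may be chosen freely from $U\setminus J$. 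If $\ell$ denotes the number of $j$ with $I_j\subset U$, then $|U\cap J|=k+\ell$, so the total count is
\begin{equation*}
2^\ell\cdot 2^{|U|-(k+\ell)}=2^{|U|-k}=2^{|U|-|w|_\d},
\end{equation*}
exactly as needed; multiplication by the prefactor $2^{|w|_\d+1}$ then gives $2^{|U|+1}$.

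The only real obstacle is the bookkeeping in the bullet above — matching the contribution of each $I_j$ to the cardinality $|U\cap J|$ so that the powers of $2$ collapse cleanly. Once that counting identity is in hand, the rest is just a change-of-basis calculation and the proposition follows. As a byproduct, the formula $s(\Theta_w)=(-1)^{\deg w+1}\Theta_{w^*}$ of Proposition \ref{pianti} can be deduced from this $F$-basis expansion together with the known antipode action $s(F_T)=(-1)^{|T|+1}F_{[n]\setminus T}$ on $\QQ_{n+1}$, since the set $\{T:T,\overline T\in b[\I^w]\}$ is stable under complementation and $\I^{w^*}$ is the reflection of $\I^w$ in $[n]$.
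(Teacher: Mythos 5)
Your proof is correct and takes essentially the same route as the paper's: both come down to the count $\#\{T\subset S \mid T,\overline{T}\in b[\I^w]\}=2^{|S|-|w|_\d}$ for $S\in b[\I^w]$ (and $0$ otherwise), obtained exactly as you do by choosing one element from each two-element interval of $\I^w$ (two choices when both of its elements lie in $S$, forced otherwise) together with an arbitrary subset of $S$ outside $\bigcup_{I\in\I^w}I$; the paper merely phrases the coefficient comparison through the uniquely determined flag $h$-coefficients of (\ref{thetab}) rather than by expanding each $F_T$ into the $M$-basis, which is the same computation. One caution about your closing aside (not needed for the proposition): the antipode of $\QQ_{n+1}$ acts on this basis by $s(F_T)=(-1)^{n+1}F_{\overline{T}^{\vee}}$, with a degree-dependent sign and the \emph{reversed} complement $\overline{T}^{\vee}=\{\,n+1-i \mid i\in[n]\setminus T\,\}$, not by $(-1)^{|T|+1}F_{[n]\setminus T}$ (whose $T$-dependent sign could not yield a multiple of $\Theta_{w^*}$), so that deduction must be run as in the paper's proof of Proposition \ref{pianti}, using the reflection identity $(\I^w)^{\vee}=\I^{w^*}$.
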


\begin{proof}
By Proposition \ref{basisprop} and (\ref{thetab}),
\begin{eqnarray}
\Theta_w = \sum h_T F_T,
\label{hrep}
\end{eqnarray}
where the $h_T$ are defined uniquely by
\begin{eqnarray*}
\sum_{T\subset S} h_T = f_S =
\begin{cases}
2^{|S|+1} &S\in b[\I^w] \cr
0 & {\rm otherwise}.
\end{cases}
\end{eqnarray*}
Since $|w|_\d = |S_w|$, we need to show that
\begin{eqnarray}
h_T =
\begin{cases}
2^{|S_w|+1} &T,\overline{T}\in b[\I^w] \cr
0 & {\rm otherwise}.
\end{cases}
\label{htheta}
\end{eqnarray}

Assuming (\ref{htheta}), we compute
\begin{eqnarray}
\sum_{T\subset S} h_T = 2^{|S_w|+1} \cdot n_S^w,
\label{sumh}
\end{eqnarray}
where
\begin{eqnarray*}
n_S^w = \# \{ T\subset S \ | \ T,\overline{T} \in b[\I^w] \}. 
\end{eqnarray*}
If $S\notin b[\I^w]$, then $n_S^w=0$.  If $S\in b[\I^w]$, let
\begin{eqnarray*}
T_1 &=& \{ i\in S_w \ | \ i \in S, i-1 \notin S \}, \\
T_2 &=& \{ i\in S_w \ | \ i-1 \in S, i \notin S \}, \\
T_3 &=& \{ i\in S_w \ | \ \{i-1,i\} \subset S \}, \hbox{\rm ~and} \\
S' &=& S \setminus(T_1\cup(T_2-1)\cup T_3\cup(T_3-1)).
\end{eqnarray*}
We have $|T_1|+|T_2|+|T_3|=|S_w|$ and
$|S'| = |S|-|T_1|-|T_2|-2|T_3|$.
For a subset $T\subset S$, both $T$ and $\overline{T}$ are in $b[\I^w]$
if and only if 
$$T=[T_1\cup(T_2-1)]\cup R_3 \cup R_4,$$
where $R_3$ consists of one element from each pair $\{i-1,i\}$, $i\in T_3$
(these pairs are disjoint), and $R_4$ is {\it any} subset of $S'$.  Thus
\begin{eqnarray*}
n_S^w = 2^{|T_3|}\cdot 2^{|S|-|T_1|-|T_2|-2|T_3|} = 2^{|S|-|S_w|},
\end{eqnarray*}
and by (\ref{sumh})
\begin{eqnarray*}
\sum_{T\subset S} h_T = 2^{|S_w|+1} \cdot n_S^w = f_S,
\end{eqnarray*}
verifying (\ref{htheta}).
\end{proof}

We can now verify the form of the antipode of $\Pi$.

\begin{proof}[Proof of Proposition \ref{pianti}.]
It follows from \cite[Proposition 7.2]{Ehr} that if $s$ is the antipode on
$\QQ$, and $P$ is Eulerian, then
$$s\left(F(P)\right)= (-1)^{r(P)}F(P^*),$$
where $P^*$ is the dual or opposite or polar poset to $P$.
Thus the antipode of $\Pi$ is simply the
antipode $s$ restricted to $\Pi$.
Recall from \cite[Corollary 2.3]{MR}
that on $\QQ$, $s$ is given on the $F$ basis by
$$ s(F_T)= (-1) ^{n+1} F_{\overline{T}^\vee} $$
for $F_T=F_T^{(n+1)}\in \QQ_{n+1}$,
where, for $S\subset [n]$, $S^\vee = \{ n+1-i\ | \ i\in S \}$.  Therefore

\begin{eqnarray*}
s(\Theta _w) &=& 
s \left(2^{|w|_\d+1} \sum _{T, \overline{T} \in b[\I^w]} F_T \right) \cr
&=&(-1)^{n+1} 2^{|w|_\d+1} \sum _{T,  \overline{T} \in b[ \I^w]} F_{T^\vee}
\cr 
&=& (-1)^{\deg w +1}\left( 2^{|w|_\d+1} 
\sum _{T,\overline{T}\in b[(\I^w)^\vee]} F_{T} \right)
= (-1)^{\deg w+1}\Theta _{w^*},
\end{eqnarray*}
where $(\I^w)^\vee = \{ I^\vee \ |\ I\in \I^w\} = \I^{w^*}$.
\end{proof}

\subsection{$\Psi_w$ and the $\cd$-index}

For any Eulerian poset $P$ of rank $n+1$, there is a polynomial of degree
$n$, $\psi_P \in \Z\langle \c,\d \rangle$, called the $\cd$-{\it index} \cite{bk}.
(Here we assume $\deg \c =1$ and $\deg \d =2$.)  We denote
by $[w]$ or $[w]_P$ the coefficient of $w$ in $\psi_P$.  The coefficient
$[w]_P$ can be expressed linearly in terms of the {\it sparse flag $k$-vector},
that is, in terms of the numbers $k_S(P)$ for right sparse $S\subset [n]$.
See \cite[Proposition 7.1]{BiE} for this expression.  Of interest here is
the inversion of this relation \cite[Definition 6.5]{BER2}, which we write
as follows.

\begin{proposition}
For right sparse $S\subset[n]$,
\begin{eqnarray*}
k_S = \sum_{\stackrel{\scriptstyle S_w\in b[\I_S]}{|w|_\d=|S|}} [w].
\end{eqnarray*}
\label{ksw}
\end{proposition}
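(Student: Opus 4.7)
The plan is to deduce this as the inversion of the expansion of the $\cd$-coefficients in terms of the sparse flag $k$-vector given in \cite[Proposition 7.1]{BiE}. By that result, for any Eulerian poset $P$ of rank $n+1$ and any $\cd$-word $w$ of degree $n$, the coefficient $[w]_P$ is an integer linear combination of the numbers $k_S(P)$ with $S$ right sparse and $|S|=|w|_\d$. Since the $\cd$-words of degree $n$ with $j$ \d's and the right sparse subsets of $[n]$ of size $j$ are both counted by $\binom{n-j}{j}$, this transformation is given on each fixed \d-degree by a square matrix $C_j$, and the claim becomes that the $0/1$ matrix with entries $\mathbf{1}[S_w\in b[\I_S]]$ (with $|w|_\d=|S|=j$) is $C_j^{-1}$.

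Concretely, I would verify the identity
\begin{equation*}
\sum_{w\,:\,|w|_\d=|S|,\ S_w\in b[\I_S]} c_{w,S'} \;=\; \delta_{S,S'}
\end{equation*}
for all right sparse $S,S'\subset[n]$ of the same size, where the $c_{w,S'}$ are the Billera--Ehrenborg coefficients from \cite{BiE}. This is essentially the content of \cite[Definition 6.5]{BER2}, where the formula of the proposition is taken as the defining inverse relation; alternatively it can be checked by direct bookkeeping once the $c_{w,S}$ are written out explicitly.

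A more intrinsic route, which I would fall back on if the matrix bookkeeping gets unwieldy, uses Proposition \ref{thetaf}: expand $\Theta_w$ in the $K$-basis of $\QQ$ via the Möbius inversion $F_S=\sum_{T\supset S}(-1)^{|T|-|S|}K_T$, and then for $F(P)\in\Pi$ with $P$ Eulerian apply the Dehn--Sommerville relations (Proposition \ref{DSQ}) to reduce the resulting expansion to right sparse indices, reading off $k_S$ as the coefficient of $K_S$. The main obstacle is organizing the combinatorics of the two blocking families $b[\I_S]$ (for right sparse $S$) and $b[\I^w]$ (for left sparse $S_w$) simultaneously so that the signed sums telescope; the right/left sparse duality, together with the fact that $\I_S$ and $\I^w$ each consist of disjoint two-element intervals, is what makes the cancellations go through.
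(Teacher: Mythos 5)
Your primary route coincides with the paper's: its proof of Proposition \ref{ksw} is nothing more than quoting \cite[Definition 6.5]{BER2} as the inverse of the relation in \cite[Proposition 7.1]{BiE}, and then translating that statement's ``$w$ covers $S$'' condition (written there with dimension rather than rank indexing) into $S\subset S_w\cup(S_w-1)$, which because $|S|=|S_w|$ is exactly $S_w\in b[\I_S]$. The only step you leave implicit --- the convention shift and the identification of the covering condition with the blocking condition --- is in fact the entire content of the paper's short argument, so your matrix-inversion bookkeeping and the fall-back route through Proposition \ref{thetaf} are not needed.
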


\begin{proof}
The expression in \cite[Definition 6.5]{BER2} sums over all $w$ of degree
$n$ that {\it cover} $S$ and have $|S|$ \d's.  Noting that in \cite{BER2},
the indexing is by dimension, not by rank as in this paper (and in \cite{BiE}),
it follows that $w$ covers $S$ if and only if $S\subset S_w\cup(S_w-1)$.
Since $|S|=|S_w|$, we can conclude $w$ covers $S$
if and only if $S_w\in b[\I_S]$.  
\end{proof}

\begin{remark}
We note that the relation in Proposition \ref{ksw} (more precisely, its
inverse \cite[Proposition 7.1]{BiE}) gives us a way to define a $\cd$-index
$\psi_F$ for any $F=\sum k_S K_S \in\Pi_{n+1}$ -- in fact, for any
$F\in \QQ_{n+1}$ -- by defining $[w]=[w]_F$, for
$\deg w = n$, directly from the coefficients $k_S$.  Further, for
non-homogeneous $F\in \QQ$, we can define $[w]_F$ for all $\cd$-words $w$
by $[w]_F = [w]_{F_i}$, where $F_i$ is the homogeneous component of $F$
of degree $\deg w + 1$.
\end{remark}

\smallskip
\begin{example}
For $F\in \QQ_3$, $F=k_\emptyset K_\emptyset + k_1\thinspace K_1 + 
k_2\thinspace K_2 +k_{12}\thinspace K_{12}$
and so we define $\psi_F= k_\emptyset\thinspace \c^2 + k_1\thinspace \d$.
For $F\in \QQ_4$, 
$F=\sum_{S\subset [3]} k_S\thinspace K_S$ and so
$$\psi_F= k_\emptyset\thinspace \c^3 + (k_2-k_1)\thinspace\c\d +
k_1\thinspace \d\c.$$
Note that in both cases, the values of $k_S$ for non-sparse $S$ are not
relevant to the definition of $\psi_F$.  For $F\in \Pi$, these values are
determined by the others.  For general $F\in\QQ$, this is no longer the case
since there are no relations on the $f_S$, and so on the $k_S$
\cite[Proposition 1.1]{BL}.
\label{cdexamp}
\end{example}
\smallskip

We now define another set of $a_{n+1}$ elements in $\QQ_{n+1}$ indexed by
words $w$ of degree $n$ and relate them to the $\Theta_w$.

\begin{definition}
For $w$ a $\cd$-word of degree $n$, let
\begin{eqnarray*}
\Psi_w = \sum_{\stackrel{\scriptstyle S\in b[\I^w]}{|S|=|w|_\d}}K_S,
\end{eqnarray*}
where the sum is over only right sparse $S\subset [n]$.
\label{psi}
\end{definition}

Consider the projection operator $F \mapsto \overline{F}$ on $\QQ$ defined
by
$$\overline{M}_S=
\begin{cases}
M_S & \hbox{\rm if}~ S ~\hbox{\rm is right sparse}\\
0 & \hbox{\rm if not}.
\end{cases}$$
Note that if $S$ is not right sparse then $\overline{K}_S=\overline{F}_S=0$.
This projection operator is injective on $\Pi$:
\begin{proposition}
If $F,G\in\Pi$ and $\overline{F}=\overline{G}$ then $F=G$.
\label{proj}
\end{proposition}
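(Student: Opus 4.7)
The plan is to show, degree by degree, that the images of the Stembridge basis elements $\{\Theta_w\}$ under the projection $F \mapsto \overline F$ are linearly independent in the subspace $V := \mathrm{span}\{M_S^{(n+1)} : S \subset [n] \text{ right sparse}\}$ of $\QQ_{n+1}$. Since $\dim \Pi_{n+1} = a_{n+1}$ equals the number of right sparse subsets of $[n]$ (and hence $\dim V$), linear independence will give injectivity on each $\Pi_{n+1}$, and hence on $\Pi$ since the projection preserves degree. The relevant bijection between $\cd$-words $w$ of degree $n$ and right sparse $S \subset [n]$ is $w \mapsto S_*(w) := S_w - 1$: left sparseness of $S_w$ (i.e.\ $1 \notin S_w$ and no two consecutive) translates directly into right sparseness of $S_w - 1$.

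From (\ref{thetab}) one has
$$\overline{\Theta_w} = \sum_{\stackrel{\scriptstyle S \text{ right sparse}}{S \in b[\I^w]}} 2^{|S|+1} M_S.$$
Observe that $S \in b[\I^w]$ is equivalent to $S_*(w) \subset S \cup (S-1)$: $S$ must meet $\{i-1, i\}$ for each $i \in S_w$, which is to say $\{j, j+1\} \cap S \neq \emptyset$ for each $j \in S_w - 1 = S_*(w)$. Form the matrix $A$ with rows indexed by $\cd$-words $w$ of degree $n$ and columns indexed by right sparse $S \subset [n]$, with entry $A_{w, S} = 2^{|S|+1}$ if $S \in b[\I^w]$ and $0$ otherwise.

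Order the right sparse subsets of $[n]$ by reverse lexicographic order ($S' > S$ iff $\max(S' \triangle S) \in S'$), and order the rows of $A$ by $w \leftrightarrow S_*(w)$ using the same order. I claim $A$ is upper triangular with nonzero diagonal. The diagonal entry is $A_{w, S_*(w)} = 2^{|S_*(w)|+1} \neq 0$, since $S_*(w) \subset S_*(w) \cup (S_*(w) - 1)$ trivially. For the off-diagonal zeros, suppose $S_*(w) > S$ in reverse lex and set $m = \max(S_*(w) \triangle S) \in S_*(w) \setminus S$. If $m+1 \in S$, then by maximality of $m$ also $m+1 \in S_*(w)$, contradicting the right sparseness of $S_*(w)$; hence $m+1 \notin S$, which yields $m \in S_*(w) \setminus (S \cup (S-1))$ and therefore $A_{w, S} = 0$.

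Consequently $A$ is invertible, the projections $\{\overline{\Theta_w}\}$ form a basis for $V$, and the projection restricted to each $\Pi_{n+1}$ is a linear isomorphism onto $V$; injectivity on $\Pi$ follows at once. The main obstacle is finding a bijection and an order that together witness triangularity; once the map $w \mapsto S_w - 1$ and the reverse lexicographic order are identified, triangularity is a short consequence of the right-sparseness of $S_*(w)$.
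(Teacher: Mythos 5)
Your proof is correct, but it follows a genuinely different route from the paper. The paper disposes of this in two sentences: it cites Bayer--Billera \cite{BB} for the fact that the generalized Dehn--Sommerville relations let the right sparse flag numbers determine the entire flag $f$-vector of an Eulerian poset, and then invokes Proposition \ref{DSQ} to transfer this to arbitrary elements of $\Pi$. You instead argue self-containedly inside the paper's own machinery: using (\ref{thetab}) you compute $\overline{\Theta}_w$, identify the bijection $w\mapsto S_w-1$ between $\cd$-words of degree $n$ and right sparse subsets of $[n]$, and show that the coefficient matrix of the $\overline{\Theta}_w$ with respect to $\{M_S : S \hbox{ right sparse}\}$ is triangular with nonzero diagonal in the colex-type order (the key point being that right sparseness of $S_w-1$ forbids $m$ and $m+1$ from both lying in it) --- I checked the triangularity claim and the equivalence $S\in b[\I^w] \Leftrightarrow S_w-1\subset S\cup(S-1)$, and both are right. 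What your approach buys is independence from the external \cite{BB} result: it proves directly that the projection maps $\Pi_{n+1}$ isomorphically onto $\overline{\QQ}_{n+1}$, which also gives the existence statement in Corollary \ref{lift} for free, and it is close in spirit to what Propositions \ref{psitheta} and \ref{projrep} establish later (that the $\overline{\Psi}_w = 2^{-|w|_\d-1}\overline{\Theta}_w$ form a basis of $\overline{\QQ}$); in effect you re-derive the Bayer--Billera ``sparse subsets suffice'' fact combinatorially from the Stembridge basis. What the paper's route buys is brevity and a conceptual link to the Eulerian/Dehn--Sommerville origin of $\Pi$ via Proposition \ref{DSQ}. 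One small presentational caveat: you should note (as the paper does, following Stembridge) that the $\Theta_w$ of degree $n+1$ are a basis, or at least a spanning set, of $\Pi_{n+1}$, since injectivity on $\Pi_{n+1}$ is deduced from linear independence of their images; with that in place the argument is complete.
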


\begin{proof}
It is shown in \cite{BB} that a consequence of the generalized
Dehn-Sommerville relations is that the
flag $f$-vector for Eulerian posets is determined by its values
on right sparse subsets.  It follows from Proposition \ref{DSQ}
that this continues to hold for elements of $\Pi$.
\end{proof}

\begin{corollary}
For any $F\in \QQ$, there is a unique element $\pi(F)\in \Pi$ such that
$\overline{\pi(F)} = \overline{F}$.  The corresponding
map $\pi : \QQ\rightarrow \Pi$ is a linear projection.
\label{lift}
\end{corollary}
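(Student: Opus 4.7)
The plan is to separate uniqueness (which falls out of Proposition~\ref{proj}) from existence (which I would get from a dimension count). For uniqueness, suppose $G_1,G_2\in\Pi$ both satisfy $\overline{G_i}=\overline{F}$. Then $\overline{G_1-G_2}=0$ and $G_1-G_2\in\Pi$, so Proposition~\ref{proj} forces $G_1=G_2$.

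For existence I would work one graded piece at a time. Let $R_{n+1}\subseteq\QQ_{n+1}$ denote the linear span of $\{M_S : S\subset[n]\text{ right sparse}\}$. Since $\overline{M_S}=M_S$ when $S$ is right sparse and $0$ otherwise, $R_{n+1}$ is precisely the image of the bar projection on $\QQ_{n+1}$; in particular $\overline{F}\in R_{n+1}$ for every $F\in\QQ_{n+1}$. The restriction of the bar map to $\Pi_{n+1}$ lands in $R_{n+1}$ and is injective by Proposition~\ref{proj}. The key observation is a dimension match: a routine induction shows the number of right sparse subsets of $[n]$ equals the Fibonacci number $a_{n+1}$, and this matches $\dim\Pi_{n+1}=a_{n+1}$ from Stembridge's result cited after (\ref{theta}). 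Hence the restricted bar map is a linear isomorphism from $\Pi_{n+1}$ onto $R_{n+1}$.

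Given $F\in\QQ$ with homogeneous decomposition $F=\sum_n F_n$, I then define $\pi(F_n)$ to be the unique preimage of $\overline{F_n}$ in $\Pi_{n+1}$ under this isomorphism and set $\pi(F)=\sum_n \pi(F_n)$. Linearity of $\pi$ is inherited from linearity of the bar map and its inverse on each graded component. For the projection property: if $G\in\Pi$, then $G$ itself satisfies $\overline{G}=\overline{G}$, so uniqueness forces $\pi(G)=G$, yielding $\pi^2=\pi$. Beyond Proposition~\ref{proj} the only nontrivial ingredient is the Fibonacci dimension count, which is elementary, so I do not anticipate any real obstacle.
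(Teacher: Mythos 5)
Your argument is correct, but your existence step takes a different route from the paper's. The paper handles existence by citing Bayer--Billera: the right sparse subsets form a basis for the flag $f$-vectors of Eulerian posets, so given $\overline{F}$ one explicitly fills in the remaining coefficients $f_S$ using the generalized Dehn--Sommerville equations, and Proposition~\ref{DSQ} guarantees the result lies in $\Pi$; uniqueness then comes from Proposition~\ref{proj}, exactly as in your proof. You instead get existence abstractly: the bar map restricted to $\Pi_{n+1}$ is injective by Proposition~\ref{proj} and lands in the span $R_{n+1}$ of the right sparse $M_S$, and since the number of right sparse subsets of $[n]$ (subsets of $[n-1]$ with no two consecutive elements) equals the Fibonacci number $a_{n+1}=\dim_\Q \Pi_{n+1}$ from Stembridge's theorem, injectivity plus equal dimensions forces the restricted bar map to be an isomorphism onto $R_{n+1}$; your Fibonacci count is right (check $n=0,1,2,3,4$ gives $1,1,2,3,5$). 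What each approach buys: the paper's construction is explicit -- it tells you how the non-sparse coefficients of $\pi(F)$ are computed from the sparse ones via the Dehn--Sommerville relations, which is used implicitly elsewhere (e.g.\ in the rank-$3$ example following the corollary) -- while yours is a cleaner purely linear-algebraic argument that needs only Proposition~\ref{proj} and the stated dimension of $\Pi_{n+1}$, at the cost of giving no formula for $\pi(F)$. The remaining points (linearity, $\pi|_\Pi=\mathrm{id}$, hence $\pi^2=\pi$, and summing over the finitely many homogeneous components) are handled the same way in both proofs, modulo your harmless indexing slip in writing $F=\sum_n F_n$ with $F_n$ meant as the component of degree $n+1$.
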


\begin{proof}
Again, from \cite{BB} we have that the right sparse subsets form a basis
for the flag $f$-vectors of Eulerian posets, and so for all of $\Pi$.
Given an $F\in \QQ$, the values of $f_S$ over all right
sparse subsets and the generalized Dehn-Sommerville equations determine
values for the remaining $f_S$ in such a way as to determine an element of
$\Pi$.  Call this element $\pi(F)$.  That $\pi(F)$ is unique follows from
Proposition \ref{proj}.

Note that if $F\in \Pi$, $\pi(F)=F$.  That the map $\pi$ is linear follows
from its construction.
\end{proof}

\begin{example}
For $F\in \Pi_3$, the generalized Dehn-Sommerville relations imply that
$f_2 = f_1$ and $f_{12} = 2\thinspace f_1$.  Thus for any
$F=f_\emptyset M_\emptyset + f_1\thinspace M_1 + 
f_2\thinspace M_2 +f_{12}\thinspace M_{12} \in \QQ_3$,
$$\pi(F) = f_\emptyset\thinspace M_\emptyset + f_1\thinspace
(M_1+M_2+2\thinspace M_{12}).$$
\end{example}
We call $\pi(F)\in \Pi$ the {\it Eulerian projection} of $F$.  That $\pi$
is not an algebra map can be seen from the fact that $\pi(M_1^{(2)})=0$
but $\pi( M_1^{(2)} \cdot M_1^{(2)})\not= 0$.  Note that for any
$F\in\QQ$, $[w]_F = [w]_{\pi(F)}$ and so the fibers of $\pi$ consist of $F\in\QQ$
having the same $\cd$-index.

The elements $\overline{\Psi}_w$ form a basis for the subspace
$\overline\QQ = {\rm span}\{M_S \ | \ S \hbox{\rm ~right sparse}\} \subset\QQ$.  
We see next that for $F \in \QQ$ the
coefficients of the expression of $\overline{F}$ in terms of this basis
are given by the $\cd$-index of $F$.

\begin{proposition}
For $F\in \QQ_{n+1}$,
\begin{eqnarray*}
\overline{F}= \sum_{\deg w=n} [w]\thinspace \overline{\Psi}_w,
\end{eqnarray*}
where $[w] = [w]_F$.
\label{projrep}
\end{proposition}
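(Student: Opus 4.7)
The plan is to expand both sides of the asserted identity in the basis $\{\overline{K}_S : S \subset [n] \text{ right sparse}\}$ of the image $\overline{\QQ}$ of the projection, and then to match coefficients. First, by Proposition \ref{basisprop}, $F = \sum_{S\subset[n]} k_S K_S$ with $k_S = k_S(F)$, and since $\overline{K}_S = 0$ whenever $S$ is not right sparse, we get $\overline{F} = \sum_{S\text{ right sparse}} k_S \overline{K}_S$. On the other hand, Definition \ref{psi} gives $\overline{\Psi}_w = \sum \overline{K}_S$, the sum ranging over right sparse $S \in b[\I^w]$ with $|S| = |w|_{\d}$. Interchanging sums in $\sum_w [w]_F\, \overline{\Psi}_w$ and comparing coefficients of $\overline{K}_S$, the identity to prove collapses to
$$k_S = \sum_{\stackrel{\scriptstyle \deg w = n,\; |w|_{\d} = |S|}{S \in b[\I^w]}} [w]_F \qquad \text{for each right sparse } S \subset [n].$$

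Now Proposition \ref{ksw}, extended to arbitrary $F\in\QQ$ by the remark following it (since both $k_S$ and $[w]_F$ are defined from $F$ directly), gives exactly the analogous formula, but with the summation condition $S_w \in b[\I_S]$ rather than $S \in b[\I^w]$. So the proof reduces to the following combinatorial lemma, which I regard as the main obstacle: for a right sparse $S\subset [n]$ and a left sparse $T\subset [n]$ with $|S|=|T|$, one has $T \in b[\I_S]$ if and only if $S \in b[\I^T]$.

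To establish this lemma, I would examine the bipartite graph $G$ on the vertex set $S \sqcup T$ in which $i\in S$ and $j\in T$ are adjacent precisely when $j \in \{i,i+1\}$ (equivalently, $i\in \{j-1,j\}$). The left sparseness of $T$ prevents both $i$ and $i+1$ from lying in $T$, so every vertex $i\in S$ has degree at most $1$ in $G$; dually, the right sparseness of $S$ prevents $j-1$ and $j$ from both lying in $S$, so every vertex $j\in T$ has degree at most $1$. Hence $G$ is a partial matching. The condition $T\in b[\I_S]$ asserts that every $i\in S$ is matched, and $S\in b[\I^T]$ asserts that every $j\in T$ is matched. Since $|S|=|T|$, either condition forces $G$ to be a perfect matching, so the two conditions are equivalent. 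Combined with the reformulation of Proposition \ref{ksw}, this yields the required identity for each right sparse $S$, completing the argument.
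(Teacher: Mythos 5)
Your proof is correct and takes essentially the same route as the paper's: expand $\overline{F}$ in the $\overline{K}_S$ for right sparse $S$, invoke Proposition \ref{ksw} (extended to all of $\QQ$ via the remark following it), and interchange the summations using the equivalence $S_w\in b[\I_S] \Leftrightarrow S\in b[\I^w]$ when $|S|=|w|_\d$. The only difference is that you actually prove this exchange equivalence with a partial-matching argument, whereas the paper simply asserts it; your lemma and its proof are correct.
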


\begin{proof}
By Proposition \ref{basisprop}, we can write $F=\sum_S k_S\thinspace K_S$
and so
\begin{eqnarray}
\overline{F}=\sum_{S \hbox{\Small ~sparse}} k_S \thinspace \overline{K}_S = 
	\sum_{S \hbox{\Small ~sparse}} \left(
	\sum_{\stackrel{\scriptstyle w: |w|_\d = |S|}{S_w\in b[\I_S]}}[w]\right)\overline{K}_S,
\label{krep}
\end{eqnarray}
by Proposition \ref{ksw}, where the sum is over $w$ of degree $n$ and
$[w]=[w]_F$.   Here {\it sparse} means
right sparse.  When $|S|=|w|_\d=|S_w|$, we have $S_w\in b[\I_S]$
if and only if $S\in b[\I^w]$, so (\ref{krep}) becomes
\begin{eqnarray}
\overline{F} = \sum_{\deg w = n}[w] \left(
	\sum_{\stackrel{\scriptstyle S \mathrm{~sparse}}
  { \stackrel{\scriptstyle |S|=|w|_\d}{S \in b[\I^w]} }} \overline{K}_S \right)
= \sum_{\deg w = n}[w]\thinspace \overline{\Psi}_w.
\end{eqnarray}

\end{proof}

\subsection{$\Theta_w$ and the \ctwod-index}

We determine the relationship between $\Psi_w$ and $\Theta_w$ and
thereby a formula for the representation of $F \in \Pi$ in terms of
the $\Theta_w$.

\begin{proposition}
For any $\cd$-word $w$,
\begin{eqnarray*}
\overline{\Psi}_w = \frac {1}{2^{|w|_\d +1}}~~ \overline{\Theta}_w.
\end{eqnarray*}
\label{psitheta}
\end{proposition}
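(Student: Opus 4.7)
The plan is to expand both sides in the $M_S$-basis restricted to right sparse subsets and match coefficients term by term.

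First, by (\ref{thetab}) and the definition of the projection $F\mapsto\overline{F}$,
\begin{eqnarray*}
\overline{\Theta}_w \;=\; \sum_{\stackrel{\scriptstyle T \in b[\I^w]}{T \text{ right sparse}}} 2^{|T|+1}\, M_T,
\end{eqnarray*}
so for each right sparse $T\subset[n]$, the coefficient of $M_T$ in $\overline{\Theta}_w$ is $2^{|T|+1}$ when $T\in b[\I^w]$ and $0$ otherwise. Similarly, substituting $K_S=\sum_{T\supset S}2^{|T|-|S|}M_T$ into Definition \ref{psi} and projecting gives
\begin{eqnarray*}
\overline{\Psi}_w \;=\; \sum_{\stackrel{\scriptstyle T \text{ right sparse}}{T\subset[n]}}\!\! \Biggl( \sum_{\stackrel{\scriptstyle S\subset T,\; S \text{ right sparse}}{S\in b[\I^w],\;|S|=|w|_\d}} 2^{|T|-|w|_\d}\Biggr) M_T.
\end{eqnarray*}

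The key combinatorial step is to show that for every right sparse $T\in b[\I^w]$, there is exactly one right sparse $S\subset T$ with $S\in b[\I^w]$ and $|S|=|w|_\d$. Write $S_w=\{i_1,\dots,i_k\}$ with $k=|w|_\d$, so $\I^w$ consists of the two-element intervals $\{i_j-1,i_j\}$; these are pairwise disjoint because $S_w$ is left sparse. Since $T$ is right sparse, $T$ contains at most one element from each consecutive pair, hence $|T\cap\{i_j-1,i_j\}|\le 1$; and since $T\in b[\I^w]$, this intersection is nonempty. Consequently $|T\cap\{i_j-1,i_j\}|=1$ for every $j$, and any $S\subset T$ meeting each $\{i_j-1,i_j\}$ must equal $S=T\cap(S_w\cup(S_w-1))$. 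This $S$ has cardinality $k$, is automatically right sparse (as a subset of $T$), and lies in $b[\I^w]$; it is the unique such set. If instead $T\notin b[\I^w]$, then no $S\subset T$ can lie in $b[\I^w]$, so the inner sum is empty.

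Combining these observations, for every right sparse $T$ the coefficient of $M_T$ in $\overline{\Psi}_w$ is $2^{|T|-|w|_\d}$ when $T\in b[\I^w]$ and $0$ otherwise. Since
\begin{eqnarray*}
\frac{1}{2^{|w|_\d+1}}\cdot 2^{|T|+1} \;=\; 2^{|T|-|w|_\d},
\end{eqnarray*}
the coefficients of $\overline{\Psi}_w$ and $\frac{1}{2^{|w|_\d+1}}\overline{\Theta}_w$ agree on every basis element $M_T$ with $T$ right sparse, which proves the proposition. The main (and essentially only nontrivial) obstacle is the uniqueness count in the middle paragraph, which hinges precisely on the interplay between the left sparseness of $S_w$ (making the intervals $\{i_j-1,i_j\}$ disjoint) and the right sparseness of $T$ (forcing $|T\cap\{i_j-1,i_j\}|\le 1$).
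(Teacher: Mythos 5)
Your proof is correct and follows essentially the same route as the paper's: both expand $\overline{\Psi}_w$ and $\overline{\Theta}_w$ over the right sparse $M_T$ and reduce everything to the fact that the disjoint intervals $\{i_j-1,i_j\}$ (disjoint by left sparseness of $S_w$) each meet a right sparse $T\in b[\I^w]$ in exactly one point, so that exactly one admissible $S\subset T$ contributes. In fact you spell out the existence half of this count, which the paper leaves implicit (it only argues that two distinct admissible $S$ cannot lie in a common right sparse $T$); just note that your sentence ``any $S\subset T$ meeting each $\{i_j-1,i_j\}$ must equal $T\cap(S_w\cup(S_w-1))$'' needs the cardinality hypothesis $|S|=|w|_\d$, which you do carry in the sum, to rule out extra elements of $T$ outside $S_w\cup(S_w-1)$.
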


\begin{proof}
Suppose $w$ has degree $n\ge 0$.
By (\ref{thetab}) we have
\begin{eqnarray}
\overline{\Theta}_w = 2\sum_{\stackrel{\scriptstyle S \mathrm{~sparse}}
	 {S\in b[\I^w]}}  2^{|S|} M_S,
\label{thetabar}
\end{eqnarray}
where all $S\subset [n]$ and sparse means right sparse.

Using (\ref{KS}), we write
\begin{eqnarray}
\overline{\Psi}_w = \sum_{\stackrel{\scriptstyle S \mathrm{~sparse}}
	 {\stackrel{\scriptstyle |S|=|w|_\d} {S \in b[\I^w]} }}
 \overline{K}_S
=   \frac{1}{2^{|w|_\d}}
	\sum_{\stackrel{\scriptstyle S \mathrm{~sparse}}
	 {\stackrel{\scriptstyle |S|=|w|_\d}  {S \in b[\I^w]} }}\left(
	\sum_{S \subset R\subset [n]} 2^{|R|}\thinspace\overline{M}_R
	\right).
\label{psibar}
\end{eqnarray}
Now suppose $S\not= S'$ are both right sparse, $|S|=|S'|=|w|_\d$ and
$S,S' \in b[\I^w]$.  Then any $R\supset S \cup S'$ is not right sparse
and so $\overline{M}_R = 0$.  Thus, combining (\ref{thetabar}) and
 (\ref{psibar}) we get
\begin{eqnarray*}
\overline{\Psi}_w = \frac {1}{2^{|w|_\d}}
	\sum_{ \stackrel{\scriptstyle S \mathrm{~sparse}} {S \in b[\I^w]} }
	2^{|S|} \thinspace M_S = \frac {1}{2^{|w|_\d +1}}~~\overline{\Theta}_w.
\end{eqnarray*}
\end{proof}

Following \cite{BER}, for any $F\in \QQ$, we call the quantities
\begin{eqnarray}
[[w]] = \frac {1}{2^{|w|_\d}} ~ [w]
\label{c2d}
\end{eqnarray}
the coefficients of the \ctwod-{\it index} of $F$, where
$[w]=[w]_F$.  When $F\in\Pi$, these coefficients provide a
representation of $F$ in terms of the basis elements $\Theta_w$.

\begin{theorem}
For $F \in \Pi$
\begin{eqnarray*}
F = \frac 1 2 ~ \sum_w [[w]] ~ \Theta_w.
\end{eqnarray*}
\label{cdtheta}
\end{theorem}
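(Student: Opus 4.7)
The plan is to combine the three preceding results — Propositions \ref{projrep}, \ref{psitheta}, and \ref{proj} — in a straightforward way. By linearity it suffices to treat the case where $F \in \Pi$ is homogeneous, say $F \in \Pi_{n+1}$, so that the sum on the right-hand side ranges over $\cd$-words $w$ of degree $n$.

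First, I would apply Proposition \ref{projrep} to the Eulerian projection $\overline{F}$, obtaining
\begin{equation*}
\overline{F} = \sum_{\deg w = n} [w]\, \overline{\Psi}_w,
\end{equation*}
where $[w] = [w]_F$. Next, I would substitute the identity $\overline{\Psi}_w = \tfrac{1}{2^{|w|_\d + 1}}\, \overline{\Theta}_w$ from Proposition \ref{psitheta}, which gives
\begin{equation*}
\overline{F} = \sum_{\deg w = n} \frac{[w]}{2^{|w|_\d + 1}}\, \overline{\Theta}_w
            = \frac{1}{2} \sum_{\deg w = n} [[w]]\, \overline{\Theta}_w,
\end{equation*}
by the definition \eqref{c2d} of $[[w]]$.

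Now set $G = \tfrac{1}{2}\sum_{\deg w = n} [[w]]\, \Theta_w$. Since each $\Theta_w$ lies in $\Pi$, so does $G$; and since the bar operator is linear, the display above says $\overline{G} = \overline{F}$. Both $F$ and $G$ belong to $\Pi$, so Proposition \ref{proj} (injectivity of the projection on $\Pi$) yields $F = G$, which is the desired identity.

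There is no real obstacle here beyond bookkeeping: the theorem is essentially a combination of the dictionary between $\cd$-index and $\overline{\Psi}_w$ (Proposition \ref{projrep}), the normalization converting $\overline{\Psi}_w$ to $\overline{\Theta}_w$ (Proposition \ref{psitheta}), and the fact that an element of $\Pi$ is determined by its values on right sparse subsets (Proposition \ref{proj}). The only subtlety worth flagging is that the identity holds verbatim on the projections $\overline{F}$ and $\overline{G}$ — not on $F$ and $G$ themselves — so invoking injectivity of $\pi$ (equivalently, the bar operator on $\Pi$) is essential to lift the equality back to $\Pi$.
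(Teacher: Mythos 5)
Your argument is correct and is essentially identical to the paper's proof: combine Proposition \ref{projrep} with Proposition \ref{psitheta} to get $\overline{F} = \tfrac12\sum_w [[w]]\,\overline{\Theta}_w$, then compare with $G=\tfrac12\sum_w [[w]]\,\Theta_w \in \Pi$ and invoke Proposition \ref{proj} to conclude $F=G$. One small terminological slip: $\overline{F}$ is the projection onto the span of the right-sparse $M_S$, not the ``Eulerian projection'' (the paper reserves that name for $\pi(F)$), but this does not affect the argument.
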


\begin{proof}
We know from Proposition \ref{projrep} and Proposition \ref{psitheta} that
\begin{eqnarray*}
\overline{F} =  \sum_w [w] ~ \overline{\Psi}_w = 
 \frac 1 2 ~ \sum_w [[w]] ~ \overline{\Theta}_w.
\end{eqnarray*}
But if
\begin{eqnarray*}
F' = \frac 1 2 ~ \sum_w [[w]] ~ \Theta_w,
\end{eqnarray*}
then $\overline{F} = \overline{F'}$ and so $F=F'$ by Proposition \ref{proj}.
\end{proof}

We restate the theorem explicitly in the poset case as

\begin{corollary}
If $P$ is any Eulerian poset, then
\begin{eqnarray*}
F(P) = \sum_w \frac {1}{2^{|w|_\d + 1}}~ [w]_P  ~ \Theta_w,
\end{eqnarray*}
where the $[w]_P$ are the coefficients of the $\cd$-index of $P$.
\end{corollary}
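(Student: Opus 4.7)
The plan is to deduce this corollary as a direct specialization of Theorem \ref{cdtheta} to the case $F = F(P)$, so essentially no new machinery is needed.

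First I would verify that $F(P) \in \Pi$ so that the theorem is applicable. This was already noted in the paragraph following the definition of $F(P)$ in \eqref{EQS}: by Proposition \ref{DSQ}, any $F \in \QQ_{n+1}$ whose coefficient vector satisfies the generalized Dehn--Sommerville relations lies in $\Pi$, and Bayer--Billera showed this holds for Eulerian posets. So $F(P) \in \Pi_{n+1}$ and Theorem \ref{cdtheta} gives
\begin{equation*}
F(P) = \frac{1}{2}\sum_w [[w]]_{F(P)} \, \Theta_w.
\end{equation*}

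Next I would identify $[w]_{F(P)}$ with $[w]_P$, the usual $\cd$-index coefficient of the Eulerian poset. The $\cd$-index $[w]_F$ of a quasisymmetric function was defined in the remark following Proposition \ref{ksw} directly from the coefficients $k_S$ in the expansion $F = \sum_S k_S K_S$, via the inversion of the relation in Proposition \ref{ksw}. Since by Proposition \ref{basisprop} the $K$-basis coefficients of $F(P)$ are exactly the entries of the flag $k$-vector $k_S(P)$ of $P$, and since Proposition \ref{ksw} precisely inverts the standard relation expressing $k_S(P)$ in terms of the $\cd$-index coefficients $[w]_P$ (for $S$ right sparse, which is all that enters the definition), it follows that $[w]_{F(P)} = [w]_P$ for every $\cd$-word $w$ of degree $n$.

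Finally I would substitute and unwind the definition $[[w]] = \frac{1}{2^{|w|_\d}}[w]$ from \eqref{c2d}, obtaining
\begin{equation*}
F(P) = \frac{1}{2}\sum_w \frac{1}{2^{|w|_\d}} [w]_P \, \Theta_w = \sum_w \frac{1}{2^{|w|_\d + 1}} [w]_P \, \Theta_w,
\end{equation*}
as required. There is no real obstacle here: the content was established in Theorem \ref{cdtheta} and in the preceding identification of $[w]_F$ with the poset $\cd$-index via the sparse flag $k$-vector; the corollary is just a restatement for the special case $F = F(P)$.
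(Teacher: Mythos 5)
Your proposal is correct and follows essentially the same route as the paper, which presents this corollary simply as a restatement of Theorem \ref{cdtheta} in the case $F=F(P)$, relying implicitly on $F(P)\in\Pi$ (Proposition \ref{DSQ}) and on the fact that the $\cd$-index of $F(P)$ defined via the sparse flag $k$-vector (Proposition \ref{ksw} and the remark following it) coincides with the usual $\cd$-index of $P$. Your explicit verification of these two points is exactly the intended content, so there is nothing to add.
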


In \cite{BER,BER2} it is shown that any zonotope $Z$ has an integral 
\ctwod-index, and so if $\F(Z)$ is the lattice of faces of $Z$, then
if we let $F(Z)=F(\F(Z))$, we have
\begin{corollary}
For a zonotope $Z$,
\begin{eqnarray*}
2 F(Z) = \sum_w [[w]] \Theta_w
\end{eqnarray*}
is in the $\Z$-span of the $\Theta_w$ in $\Pi$.
\end{corollary}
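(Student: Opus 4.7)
The plan is to derive this as an immediate consequence of Theorem~\ref{cdtheta} combined with the integrality property of the \ctwod-index of a zonotope cited from \cite{BER, BER2}.

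First I would observe that the face lattice $\F(Z)$ of a convex polytope is Eulerian, and hence so is the face lattice of a zonotope. By the discussion following Proposition~\ref{DSQ} we therefore have $F(Z) = F(\F(Z)) \in \Pi$, so Theorem~\ref{cdtheta} applies and yields
\[
2 F(Z) = \sum_w [[w]]_{F(Z)}\,\Theta_w,
\]
which is already the displayed identity in the statement. What remains is to check that every coefficient $[[w]]_{F(Z)}$ is an integer.

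For this step I would invoke the result recalled in the paragraph preceding the corollary: for any zonotope $Z$ the \ctwod-index is integral, meaning that $[[w]]_{F(Z)} = [w]_{F(Z)}/2^{|w|_\d}$ lies in $\Z$ for every $\cd$-word $w$. Substituting this into the display above exhibits $2F(Z)$ as an integer linear combination of the $\Theta_w$, proving the corollary. There is no substantive obstacle here; the entire content of the statement is that the factor $2^{|w|_\d + 1}$ in the denominator of the explicit $\cd$-index-to-$\Theta$ formula is exactly the divisibility already built into the definition of the \ctwod-index of a zonotope, so the two facts combine without any further work.
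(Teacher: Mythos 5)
Your proposal is correct and follows exactly the route the paper intends: the paper offers no separate proof, since the corollary is immediate from Theorem~\ref{cdtheta} (giving $2F(Z)=\sum_w [[w]]\,\Theta_w$ because the face lattice of a zonotope is Eulerian, so $F(Z)\in\Pi$) together with the integrality of the \ctwod-index of zonotopes cited from \cite{BER,BER2}. Your identification of the factor $2^{|w|_\d+1}$ as precisely the divisibility built into $[[w]]=[w]/2^{|w|_\d}$ is exactly the point.
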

This remains true for the dual face lattice of any
oriented matroid \cite{BER}.  Since the $\cd$-indices of $P$ and $P^*$ are
related by 
\begin{eqnarray}
[w]_{P^*} = [w^*]_P
\label{cddual}
\end{eqnarray}
(see \cite[\S3]{bk}), we have
\begin{corollary}
If $P$ is the face lattice of
any hyperplane arrangement or, more generally, oriented matroid, the
quasisymmetric function $F(P)$ has a half-integral representation in terms
of the $\Theta_w$.
\end{corollary}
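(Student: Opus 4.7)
The plan is to derive this corollary by combining the integrality result for zonotopes (equivalently, for dual face lattices of oriented matroids, as noted just before the statement) with the $\cd$-duality relation (\ref{cddual}) and Theorem \ref{cdtheta}.

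First I would observe that if $P$ is the face lattice of a hyperplane arrangement, or more generally of an oriented matroid, then its dual $P^*$ is precisely the dual face lattice of that arrangement/oriented matroid. By the result of \cite{BER} quoted in the paragraph preceding the statement, $P^*$ therefore has an integral \ctwod-index: each $[[w]]_{P^*} = 2^{-|w|_\d}[w]_{P^*}$ is an integer.

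Next I would transfer this integrality from $P^*$ to $P$ via (\ref{cddual}). Since reversing a $\cd$-word does not change its $\d$-degree, $|w^{*}|_{\d} = |w|_{\d}$, and so
\begin{equation*}
[[w]]_P = \frac{[w]_P}{2^{|w|_\d}} = \frac{[w^*]_{P^*}}{2^{|w^*|_\d}} = [[w^*]]_{P^*} \in \Z.
\end{equation*}
Hence the \ctwod-index of $P$ itself is integral.

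Finally, since $P$ is Eulerian, Theorem \ref{cdtheta} (in the form of its poset corollary) yields
\begin{equation*}
F(P) = \frac{1}{2}\sum_{w} [[w]]_P \, \Theta_w,
\end{equation*}
and each coefficient $\frac{1}{2}[[w]]_P$ is a half-integer, which is the claim. There is no real obstacle here beyond recognizing that the previous corollary's integrality statement is preserved under passage to the dual poset via (\ref{cddual}); once that is noted, the conclusion is immediate from Theorem \ref{cdtheta}.
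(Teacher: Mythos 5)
Your argument is correct and is essentially the paper's own: the paper likewise cites the integrality of the \ctwod-index for zonotopes/dual face lattices of oriented matroids from \cite{BER}, transfers it to $P$ via $[w]_{P^*}=[w^*]_P$ (noting reversal preserves $\d$-degree), and concludes by Theorem \ref{cdtheta}. No gaps; the proposal matches the intended proof.
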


\smallskip
\begin{remark}
It follows from Theorem \ref{cdtheta} that one can also view the elements
$\Theta_w$ as the limit as $m \rightarrow \infty$ of 
$2\left(\frac{2}{m}\right)^{|w|_\d}~F(P_{w,m})$, where $P_{w,m}$ is the poset defined
in the proof of \cite[Proposition 1.2]{SCD}.  Another possible
approach to Theorem \ref{cdtheta} could be made via \cite[Proposition 2.9]{BaH}
(see Proposition \ref{extreme} and the comments preceding it).
\end{remark}

\section{The Stembridge map \label{sigmasect}}

We describe in this section an algebra map defined by Stembridge in
\cite[Theorem 3.1(c)]{Stem}.  It is most natural when viewing the
algebras $\QQ$ and $\Pi$ as arising from ordinary and enriched
$P$-partitions of labeled posets. In this case, for a given labeled
poset, the map sends the quasisymmetric function in $\QQ$ obtained via the
ordinary theory to that in $\Pi$ obtained via the enriched theory.
In the case of a labeled chain, it relates bases for these algebras
in a simple manner that will serve as our definition.

	For $S\subset [n]$, define
\begin{eqnarray}
\Lambda(S) = \{ i\in S \ | \ i\not= 1, i-1 \notin S\}.
\label{lambda}
\end{eqnarray}
For any $S$, $\Lambda(S)$ is clearly left sparse.  If one writes $S$ as a
unique union of minimally many intervals, $\Lambda(S)$ will consist
of the first element of each such interval, excluding the element 1.
For example, $\Lambda(\{1,2,3,5,8,9\})= \{ 5,8\}$.
If one thinks of $S$ as the descent set of some permutation $\pi$ in the
symmetric group $\calS_{n+1}$,
then $\Lambda(S)$ consists of those descents that are preceded by
ascents, that is, the {\it peaks} of $\pi$.

We define the map $\vartheta : \QQ \rightarrow \Pi$ by
$\vartheta(F_S) = \Theta_{\Lambda(S)}$ for any $S\subset [n]$, $n \ge 0$,
where $\Theta_S$ is labeled as in the original definition (\ref{theta}).
It is proved in \cite{Stem} that $\vartheta$ is an algebra map.  It arises
naturally as the map that associates the weight enumerator of all
$P$-partitions of a labeled poset with that of all enriched $P$-partitions
of the same poset.  See \cite{Stem} for details.

\subsection{A random walk on peak sets}

As a
linear map, the restriction $\vartheta : \Pi_{n+1} \longrightarrow \Pi_{n+1}$
can be written, for $S\subset [n]$,
\begin{eqnarray}
\vartheta(\Theta_S) = 2^{|S|+1} \sum_{T,\overline{T} \in b[\I^S]}
	\Theta_{\Lambda(T)},
\label{sigma}
\end{eqnarray}
using Proposition \ref{thetaf}.
Equivalently, we can write
\begin{eqnarray}
\vartheta(\Theta_w) = 2^{|w|_\d+1} \sum_u \eta_{u,w} \Theta_u,
\label{sigmaw}
\end{eqnarray}
where
\begin{eqnarray}
\eta_{u,w}= \#\{T\subset [n] ~|~ T, \overline{T} \in b[\I^w]; \Lambda(T)=S_u\}.
\label{eta}
\end{eqnarray}

If we let $w$ and $u$ be $\cd$-words of degree $n$,
$S_w=\{w_1<w_2<\cdots <w_l\}$, $S_u=\{u_1<u_2<\cdots <u_m\}$,
$u_0=0$ and $u_{m+1}=n+2$, then we have (assuming the empty
product to be 1)

\begin{proposition}
For $\cd$-words $w$ and $u$,
\begin{eqnarray*}
\eta_{u,w} =
\begin{cases}
0 & {\rm if}~ |S_w \cap (u_i,u_{i+1})|>1 {\rm ~for~some~} i,\cr
\prod (u_{i+1}-u_i-1)
& {\rm otherwise},
\end{cases}
\end{eqnarray*}
where the product is taken over all $i$, $0\le i\le m$, such that
$S_w\cap(u_i,u_{i+1})=\emptyset$.
\label{etaform}
\end{proposition}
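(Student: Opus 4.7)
The plan is to directly enumerate the subsets $T\subset[n]$ counted by $\eta_{u,w}$. I would begin by unpacking the two defining conditions: $T,\overline{T}\in b[\I^w]$ translates to requiring, for each $j$, that exactly one of $w_j-1$ and $w_j$ lies in $T$; while $\Lambda(T)=S_u$ means that when $T$ is decomposed into its maximal runs of consecutive integers, the starting points of those runs, omitting $1$ if it happens to appear, are precisely $u_1,\ldots,u_m$.

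The key structural observation is that $\Lambda(T)=S_u$ forces $T$ to consist of exactly one run starting at each $u_i$ for $1\le i\le m$, together with possibly one additional run starting at $1$. The data of $T$ thus decouples across the open intervals $(u_i,u_{i+1})$ and is captured by the ending position $b_i$ of the run occupying each such interval: for $1\le i\le m$ one has $b_i\in\{u_i,\ldots,u_{i+1}-2\}$, and for $i=0$ either $T\cap[1,u_1-1]=\emptyset$ or there is a run from $1$ to some $b_0\in\{1,\ldots,u_1-2\}$. In each case the number of configurations within $(u_i,u_{i+1})$ is exactly $u_{i+1}-u_i-1$.

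Next, I would overlay the $S_w$ constraints. For $w_j\in S_u$, say $w_j=u_i$, the requirement that $u_i$ be a peak already forces $u_i\in T$ and $u_i-1\notin T$, so the blocking condition at $w_j$ is automatic. For $w_j\notin S_u$, the point $w_j$ lies in a unique open interval $(u_i,u_{i+1})$, and since $T$ meets that interval in at most an initial run, the condition ``exactly one of $w_j-1,w_j$ is in $T$'' forces the run to terminate at $b_i=w_j-1$. Setting $n_i=|S_w\cap(u_i,u_{i+1})|$, the number of valid configurations within $(u_i,u_{i+1})$ is therefore $u_{i+1}-u_i-1$ when $n_i=0$, equals $1$ when $n_i=1$ (since $w_j-1\in\{u_i,\ldots,u_{i+1}-2\}$ is always an admissible endpoint), and equals $0$ when $n_i\ge 2$ (the run endpoint cannot take two distinct values). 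Multiplying over the independent intervals then yields the stated formula.

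The main obstacle I expect is careful bookkeeping at the boundary interval $i=0$: one must verify that the ``no run below $u_1$'' option is correctly absorbed into the uniform count $u_1-u_0-1$, and that when $n_0=1$ the constraint unambiguously forces $1\in T$ and pins the run to $[1,w_j-1]$. A secondary technicality is that for $w_j\in S_u$, left sparseness of $S_w$ prevents $w_j-1$ from also lying in $S_w$, so no extraneous constraints spill into the analysis at neighboring positions.
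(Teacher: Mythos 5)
Your proposal is correct, and it is organized differently from the paper's proof. The paper partitions $S_w\cup S_u$ into maximal intervals, observes that $T\cap I_i=S_u\cap I_i$ on each such interval, and then classifies the complementary gaps by types $uu$, $uw$, $wu$, $ww$ according to which of the two sets supplies the neighboring endpoints; the count comes from the $uu$ gaps, the vanishing case from a $ww$ gap, and a separate reverse-inclusion argument is needed to show that every set built this way really satisfies $T,\overline{T}\in b[\I^w]$ and $\Lambda(T)=S_u$. You instead solve the constraint $\Lambda(T)=S_u$ first: such $T$ are exactly unions of runs starting at $u_1,\dots,u_m$, plus possibly a run starting at $1$, parameterized bijectively by the run endpoints $b_i\in\{u_i,\dots,u_{i+1}-2\}$ (with the additional empty option at $i=0$), giving $u_{i+1}-u_i-1$ configurations per interval; then the blocking condition ``exactly one of $w_j-1,w_j$ lies in $T$'' is automatic when $w_j\in S_u$ (since $u_i\in T$ and $u_i-1\notin T$) and pins $b_i=w_j-1$ when $w_j\in(u_i,u_{i+1})$, because $T$ meets that interval only in an initial segment attached to $u_i$ (or to $1$), so the alternative would create a run start outside $S_u$. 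This yields $1$, $0$, or $u_{i+1}-u_i-1$ choices according as the interval contains one, more than one, or no element of $S_w$, which is the stated formula. Both arguments rest on the same decoupling across the intervals $(u_i,u_{i+1})$, but your parameterization-first organization avoids the paper's gap-type case analysis and its two-directional verification, at the modest cost of the boundary bookkeeping at $i=0$, which you identify and handle correctly.
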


\begin{proof}
Let $\mathcal{A}=\{T\subset [n]\; |\; T,\overline{T}\in b[\mathcal{I}^w];
\Lambda(T)=S_u\}$.  For disjoint intervals $I,J$ of natural numbers, we will
write $I\prec J$ whenever $x<y$ for all $x\in I$ and $y\in J$.
Consider the  partition $S_w\cup S_u=I_1\cup\cdots \cup I_{r}$ into
maximal intervals, where $I_1\prec\cdots \prec I_r$.
Since $S_w$ and $S_u$ are both sparse sets, consecutive
elements in each $I_i$ alternate
between the two sets.  It is then easy to see that
for all $T\in\mathcal{A},$ $T\cap I_i = S_u\cap I_i$ for every $i$.  
In particular, $T\cap I_i$ does not depend on $T,$ and so
we only need to consider the possible elements of $T$ outside of $S_w\cup S_u$.

Let $[n+1]\backslash (S_w\cup S_u)=J_1\cup\cdots\cup J_s$ be a partition into
maximal intervals.
An interval $J$ is said to have {\it type} $xy,$ where $x,y\in\{w,u\},$ if
$I_i\prec J\prec I_{i+1}$ for some $i,$ and 
the last element of $I_i$ is in $S_x$ and the first element of $I_{i+1}$ is
in $S_y$.  For the sake of the argument, if an interval has more than one
possible type, we always choose the unique type which favors $u$.
If $\{0\}\prec J\prec I_1$ then $J$  will be given type $ux,$ where $x$ depends 
on the first element of $I_1,$ and
if $I_r\prec J\prec \{n+2\},$ then $J$ will be given type
$yu,$ where $y$ depends on the last element in $I_r$. Every $J_i$ now has a
unique type.

The condition $|S_w\cap(u_i,u_{i+1})|>1$ for some $i$ is equivalent to the
existence
of some $J_k$ of
type $ww$. In this case, for any $T\in\mathcal{A},$ there exists some $w_j$ such
that
$\Lambda(T)\cap [w_j,w_{j+1}]=\emptyset$ 
and $T$ contains exactly one element in  each interval $[w_j-1,w_j]$ and
$[w_{j+1}-1,w_{j+1}]$. This is clearly
impossible, so in this case $\mathcal{A}=\emptyset$.

Suppose now that no $J_i$ has  type $ww$. It is straightforward to verify
that for any $T\in\mathcal{A},$
if $J_i$ has type $uw$ then $T\cap J_i=J_i$; if $J_i$ has type $wu$ then
$T\cap J_i=\emptyset$; and
if  $J_i$ has type $uu$ then $J_i=(u_j,u_{j+1})$ for some $u_j$, and
 $T\cap J_i=[u_j+1, u_j+t]$ for some $0\le t\le u_{j+1}-u_j-2$.
(We set $[u_j+1,u_j]=\emptyset$.)
Thus, every $T\in\mathcal{A}$ is determined only by $T\cap J_i$ for all
intervals $J_i$ of type $uu$.
These are precisely the intervals $(u_j,u_{j+1})$, $0\le j\le m$,
such that $S_w\cap (u_j,u_{j+1})=\emptyset$. This shows that our formula is an
upper bound for $\eta_{wu}$.

For the reverse inequality, suppose that $T\subset [n]$ satisfies 
$T\cap I_i=S_u\cap I_i$ for all $i$; $T\cap J_i=J_i$ for all $J_i$ of type $uw$;
 $T\cap J_i=\emptyset$ for
all $J_i$ of type $wu$; and for all $J_i=(u_j,u_{j+1})$ of type $uu$,
 there exists a $0\le t\le u_{j+1}-u_j-2$ such that $ T\cap J_i=[u_j+1,u_j+t]$.
We first show that $T,\overline{T}\in b[\mathcal{I}^w]$. This is trivial if 
$S_w=\emptyset,$ so let $w_j\in S_w,$ and let $I_k$ be
the interval containing $w_j$.
Suppose that $w_j\in S_u$. In this case $w_j\in T,$ 
and if $w_j-1\in I_k,$ then $w_j-1\not\in T$ because $T\cap I_k$ is sparse. If
$w_j-1\not\in I_k,$ then $w_j-1\in J_i$ for some $J_i$ of type $uu$ or $wu,$ and
so  $w_j-1\not\in T$.
Now suppose that $w_j\not\in S_u$. In this case $w_j\not\in T,$ and if
$w_j-1\in I_k,$ then $w_j-1$ is in $S_u$ and hence $T$.
If $w_j-1\not\in I_k$ then $w_j-1$ is in some $J_i$ of type $uw$
(no $J_i$ has type $ww$), which implies
$w_j-1\in T$. In either case, $|T\cap [w_j-1,w_j]|=1$.

It remains to prove that $\Lambda(T)=S_u$. One can use a similar argument
to show that if  $u_j\in S_u,$ 
then $u_j-1\not\in T$. Therefore, $\Lambda(T)\supset S_u$. Let $x\in\Lambda(T)$,
so that $x\in T$ and $x-1\not\in T$.
If $x\in I_i$ for some $i,$ then $x\in S_u$ since $T\cap I_i\subset S_u$. If
$x\in J_i$ for some $i,$ then $J_i$ must have
type $uw$ or $uu$. In both cases, $x-1\in T,$ a contradiction. This completes
the proof.
\end{proof}

\begin{corollary}
The transformation $\vartheta$ is indecomposable on $\Pi_{n+1}$.
\label{indec}
\end{corollary}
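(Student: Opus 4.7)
The plan is to reinterpret ``indecomposable'' as the Perron--Frobenius irreducibility of the non-negative matrix $(\eta_{u,w})$ representing $\vartheta$ on $\Pi_{n+1}$ in the basis $\{\Theta_w\}$ (up to the positive scalars $2^{|w|_\d+1}$). Equivalently, I want to show that the digraph with an arc $w\to u$ whenever $\eta_{u,w}>0$ is strongly connected. The strategy is to read off the support of $(\eta_{u,w})$ from Proposition~\ref{etaform} and then establish strong connectivity directly, using $\c^n$ as a hub.

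From Proposition~\ref{etaform}, each factor $u_{i+1}-u_i-1$ is at least $1$: the left-sparsity of $S_u$ together with the conventions $u_0=0$ and $u_{m+1}=n+2$ forces $u_{i+1}\ge u_i+2$ for all $i$. Consequently $\eta_{u,w}$ is positive exactly when $|S_w\cap(u_i,u_{i+1})|\le 1$ for every $i$. With this criterion in hand, first I would show that from any $w$ with $S_w=\{w_1<\cdots<w_l\}$, $l\ge 1$, the left-sparse choice $S_u=\{w_2,\ldots,w_l\}$ satisfies $\eta_{u,w}>0$: only the interval $(0,w_2)$ meets $S_w$, and it does so in the single point $w_1$, while all other intervals $(w_j,w_{j+1})$ and $(w_l,n+2)$ are disjoint from $S_w$ by left-sparsity. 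Iterating at most $l$ times yields a directed path $w\to\cdots\to\c^n$. Conversely, when $w=\c^n$ (so $S_w=\emptyset$) the size condition is vacuous and the product over all intervals $(u_i,u_{i+1})$ is strictly positive by the lower bound above, so $\eta_{u,\c^n}>0$ for every $u$. Combining the two directions gives strong connectivity.

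The only real care required is to read the product in Proposition~\ref{etaform} correctly: it ranges only over those indices $i$ with $S_w\cap(u_i,u_{i+1})=\emptyset$, so positivity of the product is not automatic and must be deduced from the lower bounds on each factor, which in turn come from the left-sparsity of $S_u$ and the boundary values $u_0=0$, $u_{m+1}=n+2$. Beyond this bookkeeping no substantial obstacle is expected.
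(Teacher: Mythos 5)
Your proof is correct and follows essentially the same route as the paper: both read off from Proposition \ref{etaform} that $\eta_{u,\c^n}>0$ for every $u$ and that $\eta_{u,w}>0$ when $u$ and $w$ differ by a single $\d$ (the paper phrases this as replacing a $\c^2$ by a $\d$, you as deleting the smallest peak), and then conclude strong connectivity of the digraph of positive entries through the hub $\c^n$. The only differences are the orientation convention for the arcs (which is immaterial for indecomposability) and which $\d$ is added or removed at each step.
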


\begin{proof}
Let $\Gamma$ be the directed graph on the $\cd$-words of degree $n$ defined
by $(u,w)\in\Gamma$ whenever $\eta_{u,w}>0$.  Then $\c^n$ is a sink in
$\Gamma$, {\em i.e.}, every node has an arc pointing to $\c^n$.
Further $(u,w)\in \Gamma$ whenever $w$ is obtained from $u$ by
replacing a $\c^2$ by $\d$.  Thus every $w$ is in a directed cycle with
the word $\c^n$, and the assertion follows.
\end{proof}

As in the proof of Proposition \ref{thetaf}, we have that
$$\# \{ T\subset [n] \ | \ T,\overline{T} \in b[\I^S] \} = 2^{n-|S|},$$
so every column of the matrix representing $\vartheta$ has sum $2^{n+1}$.
Thus we can conclude that $2^{n+1}$ is an eigenvalue of the linear map
$\vartheta$.  Since $\vartheta$ is indecomposable, the Perron-Frobenius theory
of nonnegative matrices (see, {e.g.}, \cite[Chapter 16]{Bell})
implies that $2^{n+1}$
is the largest eigenvalue of $\vartheta$ on the finite-dimensional
space $\Pi_{n+1}$.  It has multiplicity 1; the corresponding eigenvector
$p_{n+1}$ is nonnegative up to scaling.  These assertions are, in fact, all
verified in the next subsection.
The coefficients of this eigenvector have a particularly interesting interpretation.

\begin{proposition}
The distribution of peak sets in the symmetric group
$S_{n+1}$ gives the nonnegative eigenvector $p_{n+1} \in \Pi_{n+1}$
of $\vartheta$ corresponding to the eigenvalue $2^{n+1}$.  That is, if
$$p_{n+1}= \sum_{S \subset [n] \hbox{\rm ~left sparse}} p_S ~ \Theta_S,$$
where $p_S$ is the number of permutations in $S_{n+1}$ with peak set $S$,
then $$\vartheta(p_{n+1}) = 2^{n+1} p_{n+1}.$$
\label{peakdistn}
\end{proposition}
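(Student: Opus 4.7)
The plan is to identify $p_{n+1}$ explicitly as a scalar multiple of $h_1^{n+1}$, where $h_1 = \sum_i x_i$, and then deduce the eigenvalue equation from the fact that $\vartheta$ is an algebra homomorphism.

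First I would observe that $h_1$ is itself an eigenvector of $\vartheta$ with eigenvalue $2$. Indeed, in degree $1$ we have $h_1 = M_\emptyset^{(1)} = F_\emptyset^{(1)}$, and since $\Lambda(\emptyset) = \emptyset$ while $\Theta_\emptyset^{(1)} = \Theta_{\mathbf{1}} = 2 M_\emptyset^{(1)}$ by the remark following (\ref{thetaw}), we have $\vartheta(h_1) = 2 h_1$. Because $\vartheta$ is multiplicative \cite{Stem}, it follows that $\vartheta(h_1^{n+1}) = 2^{n+1} h_1^{n+1}$; in particular $h_1^{n+1}$ already lies in $\Pi_{n+1}$ and is an eigenvector of $\vartheta$ for the eigenvalue $2^{n+1}$.

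Next I would use the classical expansion
$$h_1^{n+1} = \sum_{\pi \in \calS_{n+1}} F_{D(\pi)}^{(n+1)}$$
coming from ordinary $P$-partitions of the antichain on $n+1$ elements (equivalently, one expands $(x_1 + x_2 + \cdots)^{n+1}$ as a sum over words in the positive integers and groups each word by its standardization permutation). Applying $\vartheta$ term-by-term and using $\vartheta(F_{D(\pi)}) = \Theta_{\Lambda(D(\pi))}$, together with the observation that $\Lambda(D(\pi))$ is exactly the peak set of $\pi$, gives
$$\vartheta(h_1^{n+1}) = \sum_{\pi \in \calS_{n+1}} \Theta_{\Lambda(D(\pi))} = \sum_{S \subset [n] \text{ left sparse}} p_S \, \Theta_S = p_{n+1}.$$

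Combining the two computations yields the explicit identification $p_{n+1} = 2^{n+1} h_1^{n+1}$, and applying $\vartheta$ once more gives
$$\vartheta(p_{n+1}) = 2^{n+1} \vartheta(h_1^{n+1}) = 2^{n+1} \cdot 2^{n+1} h_1^{n+1} = 2^{n+1} p_{n+1},$$
as required. There is no real obstacle here; the whole argument is a short chain of formal identifications once one recognizes the dual role of $h_1$ in degree $1$ as both $F_\emptyset^{(1)}$ and $\tfrac{1}{2}\Theta_\emptyset^{(1)}$. The only ingredient worth verifying carefully is the classical identity $h_1^{n+1} = \sum_\pi F_{D(\pi)}$, which follows immediately from the standardization bijection between words and pairs (weakly increasing sequence, permutation).
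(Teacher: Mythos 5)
Your proof is correct, and it shares the paper's overall mechanism---realize $p_{n+1}$ as the $(n+1)$st power of the degree-one eigenvector and invoke multiplicativity of $\vartheta$---but the way you establish that identification is genuinely different. The paper quotes Stembridge's shuffle interpretation of products of the $\Theta_w$ \cite[(3.1)]{Stem} to assert directly that $(\Theta_{\bf 1})^{n+1}$ records the distribution of peak sets in $\calS_{n+1}$, then checks $\vartheta(\Theta_{\bf 1})=2\Theta_{\bf 1}$ and finishes by multiplicativity. You avoid the shuffle formula entirely: starting from the classical antichain $P$-partition identity $h_1^{n+1}=\sum_{\pi\in\calS_{n+1}}F_{D(\pi)}$, you push it through $\vartheta$ using only the defining rule $\vartheta(F_S)=\Theta_{\Lambda(S)}$ and the observation (made in the paper just before $\vartheta$ is defined) that $\Lambda(D(\pi))$ is the peak set of $\pi$, obtaining $p_{n+1}=\vartheta(h_1^{n+1})$; combined with $\vartheta(h_1)=2h_1$ and multiplicativity this gives $p_{n+1}=2^{n+1}h_1^{n+1}=(\Theta_{\bf 1})^{n+1}$ and then the eigenvalue equation. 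What your route buys is self-containedness: the only external input is the standard $F$-expansion of $(x_1+x_2+\cdots)^{n+1}$, and the peak-set interpretation of the coefficients emerges from the computation rather than being imported from Stembridge's shuffle combinatorics. What the paper's route buys is brevity and a statement intrinsic to the peak algebra ($p_{n+1}=(\Theta_{\bf 1})^{n+1}$, independent of $\vartheta$). Your bookkeeping is consistent throughout ($\Theta_{\bf 1}=2M_\emptyset^{(1)}$, hence $p_{n+1}=2^{n+1}h_1^{n+1}$), and the nonnegativity claim is immediate since the $p_S$ are counts of permutations.
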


\begin{proof}
From the interpretation of the multiplication of
the generators $\Theta_w$ in terms of shuffles of sequences with peak sets
given by $S_w$ \cite[(3.1)]{Stem}, it follows that
$p_{n+1}= (\Theta_{\bf 1})^{n+1}$,
where $\Theta_{\bf 1}$ is the unique generator in degree 1
corresponding to the empty $\cd$-word {\bf 1}.  That is,
$(\Theta_{\bf 1})^{n+1}$ gives the distribution of peak sets in $S_{n+1}$.
It is easy to check that $\vartheta(\Theta_{\bf 1})=2\Theta_{\bf 1}$, and
since $\vartheta$ is an algebra map, we have
$$\vartheta((\Theta_{\bf 1})^{n+1})=2^{n+1}(\Theta_{\bf 1})^{n+1}.$$
\end{proof}

See \cite[p.784]{Stem} for an expression for the coefficients of
$p_{n+1}$ in terms of peak sets of shifted standard Young tableaux.
In fact, $p_{n+1}$ is the unique nonnegative eigenvector of $\vartheta$, since
eigenvectors corresponding to any other eigenvalue must
have coefficients (in terms of the $\Theta_w$) that sum to 0.  This is so since the
vector of ones is an eigenvector for the transpose of the matrix of $\vartheta$,
and eigenvectors for a matrix and its transpose corresponding to distinct eigenvalues
must be orthogonal.

One way to interpret Proposition \ref{peakdistn} is that the operator
$\frac{1}{2^{n+1}}\vartheta$ defines a random walk on the family of left
sparse subsets of $[n]$ with stationary distribution given by the
probability distribution of peak sets in a random permutation in $S_{n+1}$.
We conjecture that this random walk is a specialization of a random
walk on $S_{n+1}$ with uniform stationary distribution.  We have checked
this through $S_4$; in fact, in each case it suffices to take
a specialization of a random walk on the braid arrangement
defined in \cite{BHR}.

We give a complete analysis of the spectrum of $\vartheta$ in the next subsection.
In particular, we show that the eigenvalues of
$\frac{1}{2^{n+1}}\vartheta$ on $\Pi_{n+1}$
are $(\frac 1 4)^k$, $0\le k \le \lfloor \frac n 2 \rfloor$.

\subsection{Diagonalization of $\vartheta$}

We describe further the spectrum of $\vartheta$ and give a complete set of
eigenvectors in $\Pi_{n+1}$ for each $n\ge 0$.  We have already observed that
$\Theta_{\bf 1}$ is the unique eigenvector in $\Pi_1$, with corresponding
eigenvalue $\lambda = 2$.  We construct the remaining eigenvectors from
$\Theta_{\bf 1}$ by means of two simple operations.

Define the map $L:\QQ\rightarrow \QQ$ by $L(M^{(n)}_S) = M^{(n+1)}_S$ for
any $S\subset [n-1]$.  We will show that $L^2=L\circ L$ commutes with $\vartheta$,
and so $L^2$ preserves eigenvectors of $\vartheta$:
if $\vartheta(v)=\lambda~v$ then
$\vartheta(L^2(v)) = L^2(\vartheta(v))=\lambda~L^2(v)$,
showing $L^2(v)$ to be an eigenvector for the same eigenvalue.

Since $\vartheta$ is an algebra map, products of eigenvectors in $\Pi$ are
again eigenvectors.  In particular, if $\vartheta(v)=\lambda~v$ then
$\Theta_{\bf 1}\cdot v$ is an eigenvector for eigenvalue $2\lambda$.  We will
consider multiplication by  $\Theta_{\bf 1}$ as a linear map on $\Pi$,
also denoted as $\Theta_{\bf 1}$ when there is no possibility of confusion.

For any $\cd$-word $w=w(\c,\d)$, define the operator
$$\widehat{w} :\Pi \longrightarrow \Pi$$
by $\widehat{w}=w(\Theta_{\bf 1},L^2)$.  For example,
$\widehat{\c\d\c}=  \Theta_{\bf 1}\circ L^2 \circ \Theta_{\bf 1}$.
Note that if $w={\bf 1}$ then $\widehat{w}$ is the identity map.
It follows from the discussion
above that $\widehat{w}$ preserves eigenvectors of $\vartheta$ on $\Pi$,
multiplying the corresponding eigenvalue by the factor $2^{|w|_\c}$, where
$|w|_\c$ is the number of \c's in $w$.

The main result of this section is

\begin{theorem}
The map $\vartheta$ is diagonalizable on $\Pi$.  A complete set of eigenvectors
is given by 
$$\Omega_w = \widehat{w}(\Theta_{\bf 1}),$$
where $w$ is any $\cd$-word $w$, $\widehat{w}(\Theta_{\bf 1})$ is
the image of $\Theta_{\bf 1}$ under the map $\widehat{w}$.
The eigenvalue corresponding to
$\Omega_w$ is $2^{|w|_\c+1}$, and so, on $\Pi_{n+1}$, the eigenvalues of $\vartheta$
are $2^{n+1-2k}$, $0\le k \le \lfloor \frac n 2 \rfloor$.
\label{diagthm}
\end{theorem}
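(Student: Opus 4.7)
The plan is to construct all eigenvectors iteratively, beginning from $\Omega_{\bf 1}=\Theta_{\bf 1}$ and applying two operations that preserve the eigenvector property of $\vartheta$: multiplication by $\Theta_{\bf 1}$, which doubles any eigenvalue since $\vartheta$ is an algebra map with $\vartheta(\Theta_{\bf 1})=2\Theta_{\bf 1}$; and $L^2$, which preserves any eigenvalue once it is shown to commute with $\vartheta$. This matches the recursion $\Omega_{\c w'}=\Theta_{\bf 1}\cdot\Omega_{w'}$ and $\Omega_{\d w'}=L^2(\Omega_{w'})$. The base case $\vartheta(\Theta_{\bf 1})=2\Theta_{\bf 1}$ is immediate from $F_\emptyset^{(1)}=M_\emptyset^{(1)}=\tfrac{1}{2}\Theta_{\bf 1}$ and $\vartheta(F_\emptyset^{(1)})=\Theta_{\Lambda(\emptyset)}=\Theta_{\bf 1}$.

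The heart of the proof, and what I expect to be the main obstacle, is the key lemma that $L^2$ commutes with $\vartheta$ on $\QQ$. Expanding $L(M_S^{(n)})=M_S^{(n+1)}$ in the $F$-basis yields $L(F_S^{(n)})=F_S^{(n+1)}-F_{S\cup\{n\}}^{(n+1)}$, and iterating gives
\[
L^2(F_S^{(n)})=\sum_{R\subset\{n,n+1\}}(-1)^{|R|}\,F_{S\cup R}^{(n+2)}.
\]
Applying $\vartheta(F_T)=\Theta_{\Lambda(T)}$ and a case analysis on whether $n-1\in S$ (which controls whether $n$ enters $\Lambda$ upon being added to $S$), the four $\Theta$-terms always collapse to $\Theta_{\Lambda(S)}^{[n+1]}-\Theta_{\Lambda(S)\cup\{n+1\}}^{[n+1]}$. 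Commutativity then reduces to the identity
\[
L^2\bigl(\Theta_\sigma^{[n-1]}\bigr)=\Theta_\sigma^{[n+1]}-\Theta_{\sigma\cup\{n+1\}}^{[n+1]}
\]
for every left-sparse $\sigma\subset[n-1]$, which I would verify by expanding both sides in the $F$-basis via Proposition \ref{thetaf}: the blocking conditions $T',\overline{T'}\in b[\I^\sigma]$ over $[n+1]$ factor through the restriction $T=T'\cap[n-1]$, and the extra interval $\{n,n+1\}$ in $\I^{\sigma\cup\{n+1\}}$, combined with the doubled scalar $2^{|\sigma|+2}$, precisely reproduces the alternating signs $(-1)^{|R|}$ on the left.

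Once the key lemma is in hand, surjectivity of $\vartheta:\QQ\to\Pi$ (from $\vartheta(F_\sigma)=\Theta_\sigma$ for each left-sparse $\sigma$) gives $L^2(\Pi)\subseteq\Pi$: if $F=\vartheta(G)$, then $L^2(F)=\vartheta(L^2G)\in\Pi$. Induction on $\deg w$ using the recursion then shows each $\Omega_w$ is a nonzero eigenvector of $\vartheta$ with eigenvalue $2\cdot 2^{|w|_\c}=2^{|w|_\c+1}$, yielding the full spectrum $\{2^{n+1-2k}:0\le k\le\lfloor n/2\rfloor\}$ on $\Pi_{n+1}$.

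For linear independence, the $a_{n+1}$ vectors $\{\Omega_w:\deg w=n\}$ match $\dim\Pi_{n+1}=a_{n+1}$, so it suffices to prove the direct-sum decomposition $\Pi_{n+1}=\Theta_{\bf 1}\cdot\Pi_n\oplus L^2(\Pi_{n-1})$. Both operations are injective (multiplication by $\Theta_{\bf 1}$ because $\QQ$ is an integral domain, and $L^2$ directly from its definition), so the summands have dimensions $a_n$ and $a_{n-1}$, summing to $a_{n+1}$. For directness, $L^2(\Pi_{n-1})$ lies in the span of $M_\beta$ with $\beta_{\mathrm{last}}\ge 3$ (since $L^2$ increases the last part of any composition by $2$), whereas the projection of $\Theta_{\bf 1}\cdot u$ onto $\mathrm{span}\{M_\beta:\beta_{\mathrm{last}}=1\}$ is injective on $u\in\QQ_n$: when compositions of $n$ are ordered by the length of their trailing $1$-run, the transition matrix is upper triangular with nonzero diagonal entries (the diagonal entry for $\alpha$ counts the insertions of $1$ in the quasi-shuffle $M_{(1)}\cdot M_\alpha$ that produce $M_{(\alpha,1)}$, equalling $1$ plus the number of trailing $1$s in $\alpha$). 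Thus $\Theta_{\bf 1}\Pi_n\cap L^2(\Pi_{n-1})=\{0\}$, completing the proof.
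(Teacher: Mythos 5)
Your proposal is correct and takes essentially the same route as the paper: the commutation $\vartheta\circ L^2=L^2\circ\vartheta$ (Proposition \ref{commprop}) as the key lemma, multiplicativity of $\vartheta$ together with $\vartheta(\Theta_{\bf 1})=2\Theta_{\bf 1}$, a direct-sum decomposition of $\Pi_{n+2}$ into $\Theta_{\bf 1}(\Pi_{n+1})$ and $L^2(\Pi_n)$ (Corollary \ref{Pidirsum}) proved via Ehrenborg's quasi-shuffle formula for $M_{(1)}\cdot M_\alpha$, and the same inductive construction of the $\Omega_w$. The only differences are cosmetic: the paper proves directness by a leading-term argument (compositions ordered by number of parts, then lexicographically, with $L(\QQ_n)$ supported on $M_\gamma$ whose last part exceeds $1$), while you use a triangularity argument with respect to the trailing-$1$-run and deduce $L^2(\Pi)\subset\Pi$ from surjectivity of $\vartheta$ rather than from the identity $L^2(\Theta_w)=\Theta_{w\c^2}-\Theta_{w\d}$, which you in any case also verify.
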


The proof of this result proceeds by a sequence of propositions.  The first of
these is the commutativity of $\vartheta$ and $L^2$ on $\QQ$.

\begin{proposition}
As maps on $\QQ$, $\vartheta \circ L^2 = L^2 \circ \vartheta$, so $L^2$ preserves
eigenvectors of $\vartheta$, as well as their eigenvalues.
\label{commprop}
\end{proposition}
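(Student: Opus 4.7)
The plan is to verify the identity on the $F$-basis of $\QQ$ and then reduce it to a transparent computation with the defining sum for $\Theta$. Both sides of $\vartheta \circ L^2 = L^2 \circ \vartheta$ are linear in $F\in \QQ$, so it suffices to show
$$\vartheta\bigl(L^2(F_S^{(n+1)})\bigr) = L^2\bigl(\vartheta(F_S^{(n+1)})\bigr)$$
for every $S\subset [n]$, $n\ge 0$.

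First I would work out the action of $L$ on the $F$-basis. Using $F_S^{(n)} = \sum_{S\subset T\subset[n-1]} M_T^{(n)}$ and $F_S^{(n+1)} = \sum_{S\subset T\subset[n]} M_T^{(n+1)}$, the terms of $F_S^{(n+1)}$ that do not appear in $L(F_S^{(n)})$ are exactly those with $n\in T$; collecting them gives $F_{S\cup\{n\}}^{(n+1)}$, so
$$L(F_S^{(n)}) \;=\; F_S^{(n+1)} - F_{S\cup\{n\}}^{(n+1)}.$$
Iterating, for $S\subset [n]$,
$$L^2(F_S^{(n+1)}) \;=\; F_S^{(n+3)} - F_{S\cup\{n+1\}}^{(n+3)} - F_{S\cup\{n+2\}}^{(n+3)} + F_{S\cup\{n+1,n+2\}}^{(n+3)}.$$
Applying $\vartheta$ on the $F$-basis and using $\vartheta(F_T)=\Theta_{\Lambda(T)}$ produces four $\Theta$'s in $\Pi_{n+3}$, so everything hinges on how $\Lambda$ interacts with adjoining $n+1$ and/or $n+2$ to $S\subset[n]$.

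The key combinatorial observation is that for $S\subset[n]$,
$$\Lambda(S\cup\{n+1\})\;=\;\Lambda(S\cup\{n+1,n+2\}) \qquad\text{and}\qquad \Lambda(S\cup\{n+2\})\;=\;\Lambda(S)\cup\{n+2\}.$$
The first equation holds because $n+2$ cannot be a peak of $S\cup\{n+1,n+2\}$ (its predecessor $n+1$ lies in the set) and the peaks among elements $\le n+1$ are determined by membership of $\{n,n+1\}$, which is the same in both sets. The second holds because $n+1\notin S$ automatically, so $n+2$ is a peak of $S\cup\{n+2\}$ and no existing peak is disturbed. These two identities collapse the four-term sum to
$$\vartheta\bigl(L^2(F_S^{(n+1)})\bigr) \;=\; \Theta_{\Lambda(S)}^{(n+3)} - \Theta_{\Lambda(S)\cup\{n+2\}}^{(n+3)}.$$

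It remains to verify that $L^2\bigl(\Theta_T^{(n+1)}\bigr) = \Theta_T^{(n+3)} - \Theta_{T\cup\{n+2\}}^{(n+3)}$ for any left sparse $T\subset[n]$ (note $T\cup\{n+2\}$ is left sparse in $[n+2]$ since $n+1\notin T$). Using the defining expansion \eqref{theta},
$\Theta_T^{(n+3)}-\Theta_{T\cup\{n+2\}}^{(n+3)}$ is the sum of $2^{|U|+1}M_U^{(n+3)}$ over those $U\subset [n+2]$ with $T\subset U\cup(U+1)$ but $n+2\notin U\cup(U+1)$; the latter is equivalent to $n+1,n+2\notin U$, i.e.\ $U\subset[n]$. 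This is precisely the result of applying $L^2$ termwise to the expansion of $\Theta_T^{(n+1)}$, which proves the identity. The main obstacle is the combinatorial step above: one must keep careful track of how $\Lambda$ behaves under enlargement of $S$ by $\{n+1\}$ versus $\{n+2\}$, including the boundary case $n=0$ where $n+1=1$ is excluded from peaks. Once that bookkeeping is done, every other step is a direct expansion.
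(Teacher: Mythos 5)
Your proof is correct and follows essentially the same route as the paper: compute $L$ (hence $L^2$) on the $F$-basis, compute $L^2$ on the $\Theta$-basis, and check that both compositions reduce to $\Theta_{\Lambda(S)}-\Theta_{\Lambda(S)\cup\{\cdot\}}$; you merely supply the $\Lambda$-bookkeeping that the paper leaves as ``one can verify.''
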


\begin{proof}
It is straightforward to verify that for $S\subset [n-1]$
\begin{eqnarray*}
L(F^{(n)}_S) = F^{(n+1)}_S - F^{(n+1)}_{S\cup\{ n \}},
\end{eqnarray*}
and so
\begin{eqnarray}
L^2(F^{(n)}_S) = F^{(n+2)}_S - F^{(n+2)}_{S\cup\{ n \}} 
		- F^{(n+2)}_{S\cup\{ n+1\}} + F^{(n+2)}_{S\cup\{ n,n+1 \}}.
\label{L2F}
\end{eqnarray}
Similarly, we have
\begin{eqnarray}
L^2(\Theta_w) = \Theta_{w\c^2} - \Theta_{w\d}
\label{L2Theta}
\end{eqnarray}
or, equivalently,
$L^2(\Theta^{(n)}_T) = \Theta^{(n+2)}_{T} - \Theta^{(n+2)}_{T\cup\{n+1\}}$
for left sparse $T\subset [n-1]$.  Now, using (\ref{L2F}) and (\ref{L2Theta}),
one can verify that
\begin{eqnarray}
\vartheta \circ L^2\thinspace(F^{(n)}_S) = L^2 \circ \vartheta\thinspace(F^{(n)}_S)
	= \Theta^{(n+2)}_{\Lambda(S)} - \Theta^{(n+2)}_{\Lambda(S)\cup\{n+1\}}.
\end{eqnarray}
\end{proof}

We note that $\vartheta \circ L \not= L \circ \vartheta$; in particular, we have
$L\circ \vartheta (M^{(1)}_\emptyset) = 2M^{(2)}_\emptyset$ while 
$\vartheta \circ L (M^{(1)}_\emptyset) = 0$.
Next, we need to show that the eigenspaces induced by $L^2$ are independent of
those induced by $\Theta_{\bf 1}$.  This will follow from

\begin{proposition}
For each $n\ge 0$,
$$\QQ_{n+1} = L(\QQ_n) \oplus \Theta_{\bf 1}(\QQ_n).$$
\label{Qdirsum}
\end{proposition}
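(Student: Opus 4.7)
The plan is to exploit primitivity of $\Theta_{\bf 1} = 2M_{(1)}$ in the Hopf algebra $\QQ$ to derive a Heisenberg-type commutation relation with a ``partial coproduct'' operator, and then to finish with a dimension count. First I would introduce the linear map $\rho : \QQ_{m+1} \to \QQ_m$ defined by $\Delta_{m,1}(F) = \rho(F) \otimes M_{(1)}$, where $\Delta_{m,1}$ denotes the bidegree-$(m,1)$ component of the coproduct on $\QQ$ (well-defined since $\QQ_1 = \Q\cdot M_{(1)}$ is one-dimensional). From the formula $\Delta(M_\alpha) = \sum_{\alpha=\alpha'\cdot\alpha''} M_{\alpha'}\otimes M_{\alpha''}$ one reads off that $\rho(M_\gamma) = M_{(\gamma_1,\ldots,\gamma_{l-1})}$ if $\gamma_l = 1$ and $\rho(M_\gamma) = 0$ otherwise. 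On the other hand, unwinding the paper's definition of $L$ gives $L(M_\alpha) = M_{(\alpha_1,\ldots,\alpha_k+1)}$, so a direct comparison of monomial bases yields the clean identification
$$L(\QQ_n) \;=\; \ker\rho \cap \QQ_{n+1}.$$

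Next I would establish the Heisenberg-type relation
$$\rho(\Theta_{\bf 1} F) \;=\; \Theta_{\bf 1}\rho(F) + 2F \qquad (F \in \QQ_n).$$
Since $\Theta_{\bf 1}$ is primitive, $\Delta(\Theta_{\bf 1}) = \Theta_{\bf 1}\otimes 1 + 1\otimes\Theta_{\bf 1}$, and $\Delta$ is an algebra map, the identity $\Delta(\Theta_{\bf 1} F) = \Delta(\Theta_{\bf 1})\Delta(F)$ splits into two pieces when one projects onto bidegree $(n,1)$: the $\Theta_{\bf 1}\otimes 1$ summand contributes $\Theta_{\bf 1}\rho(F)\otimes M_{(1)}$ via the $(n{-}1,1)$-component of $\Delta(F)$, while the $1\otimes\Theta_{\bf 1}$ summand contributes $2F\otimes M_{(1)}$ via the $(n,0)$-component $F\otimes 1$.

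With these two ingredients in hand I would prove $\Theta_{\bf 1}(\QQ_n)\cap L(\QQ_n) = 0$. If $\Theta_{\bf 1} F \in L(\QQ_n) = \ker\rho$, the commutation relation forces $F = -\tfrac12\Theta_{\bf 1}\rho(F)$, and a short induction using the same relation produces
$$\rho^k(F) \;=\; -\frac{1}{2(k+1)}\Theta_{\bf 1}\rho^{k+1}(F) \qquad (k\ge 0).$$
Since $\rho$ lowers degree by one and $F\in\QQ_n$ one has $\rho^{n+1}(F) = 0$; descending through $k = n,n-1,\ldots,0$ then forces $F = 0$. The special case $\Theta_{\bf 1} F = 0$ shows multiplication by $\Theta_{\bf 1}$ is injective on $\QQ_n$, so $\dim\Theta_{\bf 1}(\QQ_n) = \dim L(\QQ_n) = \dim\QQ_n$; combined with the trivial intersection and the matching dimension $\dim\QQ_{n+1} = 2\dim\QQ_n$ (for $n\ge 1$; the case $n = 0$ is immediate since $L(\QQ_0) = 0$ and $\Theta_{\bf 1}(\QQ_0) = \QQ_1$) this yields the direct sum. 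The main technical point is packaging the inductive descent to conclude $F = 0$; once the Heisenberg relation is in place, the rest is straightforward bookkeeping.
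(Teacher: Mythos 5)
Your argument is correct, but it takes a genuinely different route from the paper's. The paper reduces to showing $L(\QQ_n)\cap\Theta_{\bf 1}(\QQ_n)=\{0\}$ (injectivity of $L$ being clear and injectivity of multiplication by $\Theta_{\bf 1}$ being borrowed from the fact that $\QQ$ has no zero divisors), and proves the trivial intersection multiplicatively: using Ehrenborg's formula for $M_{(1)}\cdot M_\beta$ and the order on compositions by number of parts and then lexicographically, the leading term of $M_{(1)}\cdot M_\beta$ is $M_{(\beta,1)}$, whose last part is $1$, while $L(\QQ_n)$ is spanned by the $M_\gamma$ with last part greater than $1$. You reach the same trivial intersection through the coalgebra structure instead: your identification $L(\QQ_n)=\ker\rho\cap\QQ_{n+1}$ is the same ``last part $>1$'' description of the image of $L$ in disguise, and the primitivity of $\Theta_{\bf 1}=2M_{(1)}$ gives the commutation relation $\rho(\Theta_{\bf 1}F)=\Theta_{\bf 1}\rho(F)+2F$, whose iterated form $\rho^k(F)=-\frac{1}{2(k+1)}\,\Theta_{\bf 1}\rho^{k+1}(F)$ forces $F=0$ by descent; I have checked both the bidegree computation and the induction, and they are sound. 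What your route buys: injectivity of multiplication by $\Theta_{\bf 1}$ falls out as the special case $\Theta_{\bf 1}F=0$, so you never need that $\QQ$ is a domain, nor the explicit product rule in the $M$-basis or a term order; it is the more structural argument ($\rho$ is just the adjoint of right multiplication by $y_1$ on the dual side). What the paper's route buys: it is shorter and entirely elementary, resting on one known multiplication formula and a leading-term observation. Both proofs finish with the same dimension count, and both need the convention $L(\QQ_0)=0$ for the boundary case $n=0$ (otherwise $\Theta_{\bf 1}(\QQ_0)$ already exhausts $\QQ_1$); you at least make that convention explicit.
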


\begin{proof}
Since both $L$ and $\Theta_{\bf 1}$ are injective ($\QQ$ has no zerodivisors),
it is enough to prove $L(\QQ_n) \cap \Theta_{\bf 1}(\QQ_n)= \{0 \}$.  To this
end, recall that $\Theta_{\bf 1} = 2 M_{(1)}$, where $(1)$ is the unique
composition of $1$ (see (\ref{mbeta}) and (\ref{theta})).
Using the formula \cite[Lemma 3.3]{Ehr}
for multiplication in the basis $M_\beta$, we have
\begin{eqnarray*}
M_{(1)} \cdot M_\beta &=& M_{\beta,1} 
  + \sum_{i=1}^k \left(M_{(\beta_1,\dots,\beta_{i-1},1,\beta_i,\dots,\beta_k)}
  + M_{(\beta_1,\dots,\beta_{i-1},\beta_i+1,\beta_{i+1},\dots,\beta_k)}\right),
\label{Mbetamult}
\end{eqnarray*}
where $\beta = (\beta_1,\dots,\beta_k)$ is any composition of $n$.
Order compositions of $n+1$ first by the number of parts (those with fewer
parts are smaller in the order) then lexicographically, {\em i.e.},
$$\beta=(\beta_1,\dots,\beta_k) \prec (\beta'_1,\dots,\beta'_{k'})=\beta'$$
if $k<k'$ or $k=k'$ and for some $i$, $\beta_j =\beta'_j$ for $j<i$ while
$\beta_i < \beta'_i$.  With this order, the composition $(\beta,1)$ is the
largest index in the right-hand side of the expression for
$M_{(1)}\cdot M_\beta$ given above.  Since any element of
$L(\QQ_n)$ involves only combinations of
$M_{(\gamma_1,\dots,\gamma_l)}$, where $\gamma_l > 1$, this shows that
$L(\QQ_n) \cap \Theta_{\bf 1}(\QQ_n)= \{0 \}$.  
\end{proof}

From this we can conclude immediately
\begin{corollary}
For each $n\ge 1$,
$$\Pi_{n+2} = L^2(\Pi_n) \oplus \Theta_{\bf 1}(\Pi_{n+1}).$$
\label{Pidirsum}
\end{corollary}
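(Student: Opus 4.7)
The plan is to deduce the corollary from Proposition \ref{Qdirsum} (applied at degree $n+1$) together with the Fibonacci dimension count $\dim\Pi_m = a_m$ and the recursion $a_{n+2}=a_{n+1}+a_n$.

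First I would verify the two inclusions $L^2(\Pi_n)\subset \Pi_{n+2}$ and $\Theta_{\bf 1}(\Pi_{n+1})\subset \Pi_{n+2}$. The second is immediate since $\Theta_{\bf 1}\in\Pi$ and $\Pi$ is a subalgebra of $\QQ$. For the first, formula (\ref{L2Theta}) in the proof of Proposition \ref{commprop} gives
\[
L^2(\Theta_w) \;=\; \Theta_{w\c^2} - \Theta_{w\d},
\]
which lies in $\Pi_{n+2}$; since the $\Theta_w$ of degree $n$ span $\Pi_n$, linearity yields $L^2(\Pi_n)\subset\Pi_{n+2}$.

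Next I would argue directness. By Proposition \ref{Qdirsum} applied at level $n+1$, we have $\QQ_{n+2} = L(\QQ_{n+1}) \oplus \Theta_{\bf 1}(\QQ_{n+1})$, and in particular
\[
L(\QQ_{n+1}) \cap \Theta_{\bf 1}(\QQ_{n+1}) = \{0\}.
\]
Since $L^2(\Pi_n) \subset L(L(\QQ_n)) \subset L(\QQ_{n+1})$ and $\Theta_{\bf 1}(\Pi_{n+1})\subset \Theta_{\bf 1}(\QQ_{n+1})$, the intersection $L^2(\Pi_n)\cap\Theta_{\bf 1}(\Pi_{n+1})$ is contained in $\{0\}$, so the sum on the right-hand side is direct.

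Finally I would close the argument by a dimension count. Both $L$ and multiplication by $\Theta_{\bf 1}$ are injective on $\QQ$ (the former by inspection, the latter because $\QQ$ has no zero divisors, as used in the proof of Proposition \ref{Qdirsum}), so
\[
\dim L^2(\Pi_n) = a_n, \qquad \dim \Theta_{\bf 1}(\Pi_{n+1}) = a_{n+1}.
\]
By the Fibonacci recurrence,
\[
\dim\bigl(L^2(\Pi_n) \oplus \Theta_{\bf 1}(\Pi_{n+1})\bigr) = a_n + a_{n+1} = a_{n+2} = \dim\Pi_{n+2},
\]
so the direct sum fills all of $\Pi_{n+2}$, as required. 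There is no serious obstacle here; the only point that requires care is confirming that $L^2$ sends $\Pi$ into $\Pi$, and this is handed to us by (\ref{L2Theta}).
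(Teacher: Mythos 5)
Your proof is correct and takes essentially the route the paper intends: the paper deduces Corollary \ref{Pidirsum} ``immediately'' from Proposition \ref{Qdirsum}, and your write-up just makes the implicit details explicit --- that $\Pi$ is stable under $L^2$ (via (\ref{L2Theta})) and under multiplication by $\Theta_{\bf 1}$, that directness follows from the $\QQ$-level decomposition at degree $n+2$, and that injectivity plus the Fibonacci count $a_n+a_{n+1}=a_{n+2}=\dim\Pi_{n+2}$ forces the direct sum to exhaust $\Pi_{n+2}$. No gaps.
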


Now we can complete the

\begin{proof}[Proof of Theorem \ref{diagthm}]
A basis of eigenvectors is constructed inductively, beginning with
$\Theta_{\bf 1}$ for $\Pi_1$ and $(\Theta_{\bf 1})^2$ for $\Pi_2$.
If we have constructed a basis for $\Pi_n$ and $\Pi_{n+1}$, then
applying $L^2$ to the former and $\Theta_{\bf 1}$ to the latter yields
a basis for $\Pi_{n+2}$ by Corollary \ref{Pidirsum}.  The resulting
basis consists of all $\Omega_w$, where $\deg w = n+1$.  The eigenvalue
corresponding to $\Omega_{\c^{n+1}}= \Theta_{\bf 1}^{n+2}$ is $2^{n+2}$
by Proposition \ref{peakdistn}.
Every substitution of a \d\  for a $\c^2$ divides the eigenvalue by 4.
\end{proof}

\begin{remark}Note that $\Omega_{\c^n}= \Theta_{\bf 1}^{n+1}$ is the peak set
distribution of $S_{n+1}$ as described in Proposition \ref{peakdistn}.
It would be interesting to see whether the other eigenvectors $\Omega_w$ have
similar combinatorial interpretations.
\end{remark}
\begin{remark} In (\ref{L2Theta}) we observe
$L^2(\Theta _w) = \Theta _{w\c ^2} - \Theta _{w\d}$. Similarly, it is
straightforward to observe
\begin{eqnarray*}
\Theta _1 (\Theta _w)&=& \Theta _{\c w} + \Theta _{w\c } + 
\sum _{w=w_1\c w_2} \Theta _{w_1\d w_2}\\
&&+\sum _{w=w_1\d w_2} (\Theta _{w_1\c\d w_2} +\Theta _{w_1\d\c w_2}).
\end{eqnarray*}
For example, 
\begin{eqnarray*}
\Theta _1(\Theta _{\c\d})&=&\Theta _{\c\c\d} + \Theta _{\c\d\c} + 
\Theta _{\d\d} + \Theta _{\c\c\d} + \Theta _{\c\d\c}\\
&=& 2 \Theta _{\c ^2\d} + 2 \Theta _{\c\d\c} + \Theta _{\d ^2}.
\end{eqnarray*}
\end{remark}
\begin{remark}
With the basis $\Omega_w$, we can define a new $\cd$-index for elements
$F\in \Pi$ or for Eulerian posets $P$, in which the
coefficient of the word $w$ is given by the corresponding coefficient of
the basis element $\Omega_w$ in the expression of $F$ or $F(P)$.  This
does not appear to have reasonable properties for face posets of polytopes,
although it is nonnegative for {\em simplicial} 3-polytopes.
\end{remark}
\begin{remark}
The cone in $\Pi_{n+1}$ spanned by all $\Omega_w$, $\deg w = n$ is not
invariant under the antipode $s$ on $\Pi$, as is that spanned by the $\Theta_w$.
On the other hand, its extreme rays, and so all its faces, are fixed by
the combinatorially interesting map $\vartheta$.  It might be useful
to have a basis invariant under both $s$ and $\vartheta$.  The corresponding
index might have some interesting properties.
\end{remark}

\subsection{Peaks, hyperplane arrangements and Gorenstein$^*$ posets}

It has been pointed out to us by Aguiar and Bergeron (personal communications)
that the map $\vartheta$ is essentially the map $\omega$ of \cite{BER}.  More
precisely, if $L$ is any geometric lattice, let $L_{\hat{0}}$ be the lattice $L$
with a {\it new} minimal element $\hat{0}$ added.  Then $L_{\hat{0}}$ is a
graded lattice and so $F(L_{\hat{0}}) \in \QQ$.

\begin{proposition}
For the geometric lattice $L$ of an oriented matroid ${\mathcal O}$,
$$\vartheta(F(L_{\hat{0}}))=2 \ F(Z),$$
where $Z$ is the dual face lattice of ${\mathcal O}$.  In particular, when
${\mathcal O}$ corresponds to an arrangement of hyperplanes, then $Z$
is the face lattice of the associated zonotope.
\label{zasprop}
\end{proposition}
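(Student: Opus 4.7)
The plan is to expand both sides in the $\Theta_w$-basis of $\Pi$ and reduce the desired equality to the main theorem of \cite{BER}, which expresses the $\cd$-index of a zonotope as an explicit linear transform of the flag $h$-vector of the associated geometric lattice. Let $n+1$ be the rank of $L_{\hat 0}$, equivalently the rank of the face lattice of $Z$. By Proposition \ref{basisprop},
$$F(L_{\hat 0}) = \sum_{S \subset [n]} h_S(L_{\hat 0})\, F_S,$$
so by linearity and $\vartheta(F_S) = \Theta_{\Lambda(S)}$, grouping terms according to the $\cd$-word $w$ with $S_w = \Lambda(S)$ gives
$$\vartheta(F(L_{\hat 0})) = \sum_{\deg w = n} \Bigl(\sum_{\Lambda(S) = S_w} h_S(L_{\hat 0})\Bigr)\, \Theta_w.$$
On the other hand, the face lattice of $Z$ is Eulerian, so Theorem \ref{cdtheta} applied to $F(Z)$ yields
$$2F(Z) \;=\; \sum_{\deg w = n} \frac{[w]_Z}{2^{|w|_\d}}\, \Theta_w.$$

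Comparing coefficients of $\Theta_w$ reduces the proposition to the identity
$$[w]_Z \;=\; 2^{|w|_\d} \sum_{\Lambda(S) = S_w} h_S(L_{\hat 0})$$
for every $\cd$-word $w$ of degree $n$. This is precisely the content of the main theorem of \cite{BER}: the $\omega$-map, defined there by explicit letter-substitution rules on $ab$-words, sends the $ab$-index of $L_{\hat 0}$ (whose coefficients recover the flag $h$-numbers $h_S(L_{\hat 0})$) to the $\cd$-index of $Z$, with the factor $2^{|w|_\d}$ accounting for the rule by which $\omega$ pairs off consecutive $b$'s into $\d$'s.

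The principal obstacle, foreshadowed by the remark of Aguiar and Bergeron preceding this proposition, is to align Stembridge's combinatorial definition of $\vartheta$ (via $\Lambda$, i.e.\ peak sets of compositions) with the letter-substitution definition of $\omega$ in \cite{BER}. Once the dictionary $F_S \leftrightarrow a^{i_1-1}\, b\, a^{i_2-i_1-1}\, b \cdots b\, a^{n-i_k}$ for $S = \{i_1 < \cdots < i_k\} \subset [n]$ is in place, the peak operator $S \mapsto \Lambda(S)$ encodes exactly which $b$'s are consolidated into $\d$'s under $\omega$, so the action of $\vartheta$ on $\QQ_{n+1}$ coincides with that of $\omega$ on degree-$n$ $ab$-polynomials. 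With this identification, the displayed identity is the zonotope/oriented-matroid theorem of \cite{BER}, and the proposition follows.
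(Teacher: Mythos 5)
Your proposal is correct and takes essentially the same route as the paper: the paper's (very terse) proof likewise rests on identifying $\vartheta$ with the $\omega$ map of \cite{BER} (the Aguiar--Bergeron observation, with the $R$-labeling of $L$ and the label $0$ on the cover over $\hat 0$ producing exactly the flag $h$-numbers $h_S(L_{\hat{0}})$ that get fed to $\omega$) and then citing \cite[Corollary 3.2]{BER}, with Theorem \ref{cdtheta} supplying the translation between the $\cd$-index of $Z$ and the $\Theta_w$-expansion of $2F(Z)$, just as you do. One minor wording slip: $\omega$ replaces each occurrence of $ab$ (an ascent followed by a descent, i.e.\ a peak) by $2\d$, not ``consecutive $b$'s,'' which is precisely why $\Lambda(S)$ records the $\d$-positions; your displayed identity $[w]_Z = 2^{|w|_\d}\sum_{\Lambda(S)=S_w} h_S(L_{\hat{0}})$ is the correct content of the cited result.
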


\begin{proof}
If we give the usual $R$-labeling to $L$, and label the unique cover
relation over $\hat{0}$ by 0, then this follows from the observation
of Aguiar-Bergeron and \cite[Corollary 3.2]{BER}.
\end{proof}

One can view Proposition \ref{zasprop} as a complete summary of the
relationship between enumerative invariants of chains in a central hyperplane
arrangement and those of the associated lattice of intersections, whose
study was begun by Zaslavsky in \cite{Zas}.

Since geometric lattices are known to be Cohen-Macaulay posets,
that is, the associated complex of chains is a Cohen-Macaulay complex
\cite{Stanb}, it follows that $L_{\hat{0}}$ is also Cohen-Macaulay and so
$F(L_{\hat{0}})$ has a nonnegative representation in the basis $\{F_S\}$
of $\QQ$.  As a consequence, we get from Proposition \ref{zasprop} a
special case of \cite[Corollary 2.2]{SCD}, namely, we can conclude
that arrangements and zonotopes have nonnegative $\cd$-indices.

A poset is called {\em Gorenstein$^*$} if it is both Eulerian and Cohen-Macaulay.
Such posets include all face posets of spherical complexes.  Stanley
has conjectured that if $P$ is Gorenstein$^*$, then it has a nonnegative $\cd$-index,
that is, $[w]_P\ge 0$, for all $\cd$-words $w$ \cite[Conjecture 2.1]{SCD}.
In light of Theorem \ref{cdtheta}, this amounts to saying that for $P$
Gorenstein$^*$, $F(P)$ must lie in the cone in $\Pi_{n+1}$ generated by the
$\Theta_w$, $\deg w = n$, that is, the nonnegative orthant of $\Pi_{n+1}$
defined by the basis $\{\Theta_w\}$.

The map $\vartheta$ allows us to define a slightly larger simplicial cone than the
nonnegative orthant in $\Pi_{n+1}$ that must contain $F(P)$ for Gorenstein$^*$
posets $P$.

\begin{proposition}
For Cohen-Macaulay posets $P$, we always have $\vartheta(F(P)) \ge 0$, that is,
$\vartheta(F(P))$ always lies in the cone in $\Pi_{n+1}$ generated by the
$\Theta_w$, $\deg w = n$.
\label{G*}
\end{proposition}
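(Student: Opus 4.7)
The plan is to exploit the very simple interaction between $\vartheta$ and the $F$-basis, combined with the classical fact that Cohen--Macaulay posets have nonnegative flag $h$-vectors. Recall from the definition of $\vartheta$ that $\vartheta(F_S) = \Theta_{\Lambda(S)}$, and $\Lambda(S)$ is always left sparse, so $\Theta_{\Lambda(S)}$ is always one of the basis elements $\Theta_w$ of $\Pi$. Thus $\vartheta$ sends every element of the basis $\{F_S\}$ to a single basis element of $\{\Theta_w\}$, which means it sends the nonnegative orthant spanned by the $F_S$ in $\QQ_{n+1}$ into the nonnegative orthant spanned by the $\Theta_w$ in $\Pi_{n+1}$.

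With this observation in hand, the proof reduces to showing that $F(P)$ lies in the nonnegative cone spanned by the $\{F_S\}$ whenever $P$ is Cohen--Macaulay. By Proposition \ref{basisprop}, $F(P) = \sum_{S \subset [n]} h_S(P)\, F_S$, where the $h_S(P)$ are the entries of the flag $h$-vector. The key input is the standard result (due to Stanley, via the interpretation of the order complex) that a graded poset $P$ is Cohen--Macaulay exactly when the associated order complex is Cohen--Macaulay, and in that case the flag $h$-vector is nonnegative: $h_S(P) \ge 0$ for all $S \subset [n]$. I would cite this fact from \cite{Stanb} (or the survey \cite{BiBj}) rather than reprove it.

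Combining the two steps, I would write
\begin{eqnarray*}
\vartheta(F(P)) \;=\; \sum_{S \subset [n]} h_S(P)\, \vartheta(F_S) \;=\; \sum_{S \subset [n]} h_S(P)\, \Theta_{\Lambda(S)} \;=\; \sum_{w} \Bigl(\sum_{S : \Lambda(S) = S_w} h_S(P)\Bigr)\, \Theta_w,
\end{eqnarray*}
and each inner sum is nonnegative, so $\vartheta(F(P))$ lies in the cone spanned by the $\Theta_w$ as claimed.

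There is no real obstacle here; the proof is essentially a one-line consequence of the definition of $\vartheta$ on the $F_S$-basis together with the nonnegativity of the flag $h$-vector for Cohen--Macaulay posets. The only thing worth flagging is that the statement gives a weaker conclusion than Stanley's conjecture for Gorenstein$^*$ posets (which would require $F(P)$ itself, not $\vartheta(F(P))$, to lie in the cone spanned by the $\Theta_w$); here we benefit from having the extra operator $\vartheta$ to ``enrich'' the $F$-basis expansion into a $\Theta$-basis expansion.
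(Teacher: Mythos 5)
Your argument is exactly the paper's proof: expand $F(P)=\sum_S h_S(P)\,F_S$ via Proposition \ref{basisprop}, invoke nonnegativity of the flag $h$-vector for Cohen--Macaulay posets from \cite{Stanb}, and apply $\vartheta(F_S)=\Theta_{\Lambda(S)}$ to land in the nonnegative cone spanned by the $\Theta_w$. The only difference is that you spell out the regrouping by $\Lambda(S)=S_w$ explicitly, which the paper leaves implicit.
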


\begin{proof}
By Proposition \ref{basisprop}, we have $F(P) = \sum h_S F_S$, where $h_S\ge 0$
since $P$ is Cohen-Macaulay \cite{Stanb}.  The proposition now follows from the
definition of $\vartheta$.
\end{proof}

Considering $\vartheta$ restricted to $\Pi_{n+1}$, we can view the set
$\{ \ F\in \Pi_{n+1}\ | \ \vartheta(F)\ge 0 \ \}$ as a simplicial cone in
$\Pi_{n+1}$.  A more explicit description in terms of inequalities on the
coefficients $[w]_P$ is given by the rows of the matrix $(\eta_{u,w})$
in (\ref{eta}).
This cone includes the image of the nonnegative orthant in $h$-space
under the linear map that takes the flag-$h$ vector to the $\cd$-index.
That this latter cone is given by the inequalities
\begin{eqnarray}
h_T = \sum_{T,\overline{T}\in b[\I^w]} [w] \ge 0
\label{hnonneg}
\end{eqnarray}
follows directly from \cite[Proposition 1.3]{SCD} or from Proposition \ref{thetaf}.
It is straighforward to obtain the inequalities in Proposition \ref{G*} from those
in (\ref{hnonneg}): to get the inequality given by row $u$ in $(\eta_{u,w})$,
add the expression for $h_T$ over all $T$ for which $\Lambda(T) = S_u$.

\smallskip
\begin{example}
If $P$ is Gorenstein$^*$  and the rank of $P$ is 4, then the cone described in
Proposition \ref{G*} is given in $\cd$-coordinates by the inequalities
\begin{eqnarray*}
4[\c^3] + \phantom{2}[\c\d] +\phantom{2} [\d\c] &\ge& 0 \cr
2[\c^3] + 2[\c\d] +\phantom{2} [\d\c] &\ge& 0 \cr
2[\c^3] + \phantom{2}[\c\d] +2 [\d\c] &\ge& 0.
\end{eqnarray*}
On the other hand, the nonnegativity of the $h_S$ imply directly that
$$\begin{array}{cccccccccc}
h_2&=&h_{13}&=& [\c^3]& +& [\c\d]& +& [\d\c]  &\ge 0 \cr
h_1&=&h_{23}&=& [\c^3] &&& +&[\d\c]  &\ge 0 \cr
h_3&=&h_{12}&=& [\c^3]& +&[\c\d]&&  &\ge 0 \cr
h_\emptyset&=&h_{123}&=& [\c^3]&&&&   &\ge 0 \cr
\end{array}$$
The second system clearly implies the first.
\end{example}

\section{The $g$-homomorphism\label{gsect}}

We define an algebra homomorphism from $\QQ$ to $\Q[x]$ that extends the
definition of the $g$-polynomial of a graded poset.
In the case of the face lattices of (rational) convex polytopes, this
polynomial is related to the Poincar\'e polynomial of the associated toric
variety.  For all rational polytopes, the $g$-polynomial is known to have
nonngegative coefficients; in the case of simplicial convex
polytopes, this fact is known as the {\it generalized lower bound theorem}.
It was proved by Stanley \cite{SGLB,s-gh} by means
of the toric variety associated to a rational polytope.
It remains open for nonrational polytopes.

We begin by defining the $g$-polynomial of a graded poset.  For any graded
poset $P$ of rank $n+1$ we define two polynomials $f(P,x),g(P,x)\in\Q[x]$
(actually in $\Z[x]$) recursively as follows.
If $n+1=0$, then $f(P,x)=g(P,x)=1$.  If $n+1>0$, then
\begin{eqnarray}
f(P,x)= \sum_{y\in P\setminus \{\hat{1}\}} g([\hat{0},y],x)(x-1)^{n-r(y)}.
\label{fdef}
\end{eqnarray}
If $f(P,x)=\sum_{i=0}^{n} \kappa_i x^i$ has been defined, then we define
\begin{eqnarray}
g(P,x)= \kappa_0 + \left(\kappa_1-\kappa_0 \right)\thinspace x+ \cdots +
\left(\kappa_{\lfloor\frac{n}{2}\rfloor} -
 \kappa_{\lfloor\frac{n}{2}\rfloor -1}\right)\thinspace
 x^{\lfloor\frac{n}{2}\rfloor}.
\label{gdef}
\end{eqnarray}
For an Eulerian poset $P$, the vector
$(h_0,\dots, h_n)=(\kappa_n,\dots,\kappa_1,\kappa_0)$ is what is
usually called the {\em toric $h$-vector} of $P$.  Since for Eulerian
$P$, $h_i=h_{n-i}$ \cite{s-gh}, our definition of $g(P,x)$ agrees with
the usual one in the Eulerian case.  We note that in \cite{BE}, this
distinction between $\kappa_i$ and $h_i$ is not made, so their formulas for
$h_i$ are, in reality, for $h_{n-i}$.

Since the coefficients of $g(P,x)$ are integer linear combinations of
the quantities $f_S(P)$ (see, for example, \cite[Theorem 6]{bk},
\cite[Theorem 3.1]{BE} or \cite[\S 4.3]{BL}), these necessarily unique
expressions can be used to extend this definition to give a linear map
\begin{eqnarray}
g: \QQ \longrightarrow \Q[x],
\label{gQ}
\end{eqnarray}
satisfying $g(F(P))=g(P,x)$ for any graded poset $P$.  That $g$ is an
algebra homomorphism follows from the following observation, which was
first noted in \cite{K} in the case of polytope face lattices.  Its
proof depends on the fact that an interval in a product of posets is the
product of intervals from each, and seems not to have appeared in this
generality anywhere.

\begin{proposition}
For graded posets $P$ and $Q$, 
$$g(P\times Q,x)= g(P,x) g(Q,x).$$
\label{gproduct}
\end{proposition}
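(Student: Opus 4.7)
The plan is to induct on $r(P) + r(Q)$, exploiting the fact that every interval $[(\widehat 0,\widehat 0),(p,q)]$ in $P\times Q$ factors as $[\widehat 0_P,p]\times[\widehat 0_Q,q]$. The base case, where one factor has rank $0$, is immediate: $P\times Q$ is then isomorphic to the nontrivial factor, and $g$ of a one-element poset is $1$.

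For the inductive step I would first derive a closed formula for $f(P\times Q,x)$. Applying the inductive hypothesis (valid for every interval $[\widehat 0,p]\times[\widehat 0,q]$ with $(p,q)\ne(\widehat 1,\widehat 1)$, since such intervals have rank strictly less than $r(P)+r(Q)$) to each summand of (\ref{fdef}) and partitioning the resulting double sum according to whether $p=\widehat 1_P$, $q=\widehat 1_Q$, or both $p<\widehat 1_P$ and $q<\widehat 1_Q$, I expect to obtain
\begin{equation*}
f(P\times Q,x) = (x-1)\,f(P,x)\,f(Q,x) + g(P,x)\,f(Q,x) + g(Q,x)\,f(P,x).
\end{equation*}
The three pieces arise because $\sum_{a<\widehat 1_P} g([\widehat 0,a],x)(x-1)^{r(P)-1-r(a)}$ is precisely $f(P,x)$, while when $a=\widehat 1_P$ the coefficient $g([\widehat 0,\widehat 1_P],x)=g(P,x)$ appears (and symmetrically for $Q$).

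To extract $g(P\times Q,x)$ from this, for any graded poset $R$ of rank $r$ I would introduce the complementary polynomial
\begin{equation*}
g^*(R,x) := (1-x)\,f(R,x) - g(R,x).
\end{equation*}
A direct coefficient comparison using (\ref{gdef}) shows $g^*(R,x)$ is supported on degrees $>\lfloor(r-1)/2\rfloor$ while $g(R,x)$ is supported on degrees $\le\lfloor(r-1)/2\rfloor$, so that $(1-x)\,f(R,x)=g(R,x)+g^*(R,x)$ is a clean low/high degree decomposition. Multiplying the $f$-identity above by $(1-x)$ and substituting this decomposition for both $f(P)$ and $f(Q)$, the cross terms cancel pairwise to leave
\begin{equation*}
(1-x)\,f(P\times Q,x) = g(P,x)\,g(Q,x) - g^*(P,x)\,g^*(Q,x).
\end{equation*}
The left side equals $g(P\times Q,x)+g^*(P\times Q,x)$, so the proposition follows provided the right side exhibits the same low/high split. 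This reduces to the elementary inequality $\lfloor(r(P)-1)/2\rfloor+\lfloor(r(Q)-1)/2\rfloor \le \lfloor(r(P\times Q)-1)/2\rfloor \le \lfloor(r(P)-1)/2\rfloor+\lfloor(r(Q)-1)/2\rfloor+1$, which forces $g(P,x)\,g(Q,x)$ into the low-degree range and $g^*(P,x)\,g^*(Q,x)$ into the high-degree range for $P\times Q$. Equating low parts gives $g(P\times Q,x) = g(P,x)\,g(Q,x)$.

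The principal obstacle is spotting the right auxiliary construction: the ``$g^*$'' trick is exactly what makes the truncation built into (\ref{gdef}) compatible with multiplication. A naive attempt to apply the truncation operator directly to the $f$-identity runs into trouble because the truncation level depends on the rank, which for $P\times Q$ differs from that of either factor; routing through the decomposition $(1-x)\,f = g + g^*$ circumvents this by encoding the truncation as a degree restriction that is preserved under products.
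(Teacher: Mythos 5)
Your proposal is correct and follows essentially the same route as the paper: derive the product identity for $f(P\times Q,x)$ by factoring intervals and inducting on rank, then split $(1-x)f = g + \tilde g$ into low- and high-degree parts so the cross terms cancel and the product $\tilde g(P,x)\tilde g(Q,x)$ lies entirely above the truncation degree for $P\times Q$. Your $g^*$ is exactly the paper's $\tilde g$, and your floor-function degree bookkeeping (plus the explicit handling of a rank-$0$ factor) matches, indeed slightly sharpens, the paper's argument.
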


\begin{proof}
The conclusion is immediate if $r(P\times Q)=0$.
Otherwise, using (\ref{fdef}) and induction, we get
\begin{eqnarray*}
(1-x)f(P\times Q,x) &=& g(P,x)\thinspace (1-x)f(Q,x) +
	(1-x)f(P,x)\thinspace g(Q,x) \cr
 & & \quad -~ (1-x)f(P,x)\thinspace (1-x)f(Q,x).
\end{eqnarray*}
By (\ref{gdef}), $g(P \times Q,x)$ consists of the terms of $(1-x)f(P\times Q,x)$
of degree at most $(r(P)+r(Q)-1)/2$.
Writing $(1-x)f(P,x)=g(P,x)+\tilde{g}(P,x)$, similarly
for $Q$, we note that all the terms of $\tilde{g}(P,x)$ (respectively,
$\tilde{g}(Q,x)$) have
degree at least $r(P)/2$ (respectively, $r(Q)/2$).  Now
\begin{eqnarray*}
(1-x)f(P\times Q,x) = g(P,x)\thinspace g(Q,x) -
 \tilde{g}(P,x)\thinspace \tilde{g}(Q,x),
\end{eqnarray*}
where the last term has only terms of degree at least $(r(P)+r(Q))/2$.  The
proposition follows.
\end{proof}

Using the fact that $\QQ$ is spanned by elements of the form $F(P)$
\cite[Proposition 1.1]{BL}, and
recalling that $F(P\times Q)=F(P)F(Q)$ \cite{Ehr}, we can conclude

\begin{corollary}
The map
$$g: \QQ \longrightarrow \Q[x]$$
is an algebra homomorphism.
\label{ghomo}
\end{corollary}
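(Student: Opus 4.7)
The plan is to reduce the statement to Proposition \ref{gproduct} via the fact that $\QQ$ is spanned by quasisymmetric functions of graded posets. By construction (cf.\ (\ref{gQ})), $g$ is already linear, so the only thing to verify is multiplicativity: $g(FG) = g(F)\, g(G)$ for all $F,G\in \QQ$. Since both sides are bilinear in $F$ and $G$, it suffices to check this identity on a spanning set.

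First I would invoke \cite[Proposition 1.1]{BL} to conclude that $\QQ$ is spanned by the formal quasisymmetric functions $F(P)$ as $P$ ranges over graded posets. Thus every element of $\QQ$ is a $\Q$-linear combination of such $F(P)$'s, and it is enough to verify multiplicativity on pairs of the form $(F(P), F(Q))$.

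For such a pair, I would use Ehrenborg's formula $F(P\times Q) = F(P)\, F(Q)$ from \cite{Ehr} to rewrite $g(F(P)\, F(Q)) = g(F(P\times Q))$. By the defining property of the linear extension $g:\QQ\rightarrow \Q[x]$, this equals the polynomial invariant $g(P\times Q, x)$ of the product poset. Now I apply Proposition \ref{gproduct} to factor this as $g(P,x)\, g(Q,x)$, which in turn equals $g(F(P))\, g(F(Q))$ by the definition of $g$ on $\QQ$. Combining these equalities yields $g(F(P)\, F(Q)) = g(F(P))\, g(F(Q))$, and bilinearity extends this to all of $\QQ$.

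There is essentially no obstacle at this stage: all of the work has been done in Proposition \ref{gproduct} (which required the careful argument about the truncation defining $g$) and in the structural results of Billera--Liu and Ehrenborg, which furnish the spanning set and the comultiplicativity of $F(-)$. The corollary is simply the assembly of these three ingredients.
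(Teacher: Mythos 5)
Your proof is correct and follows exactly the same route as the paper: reduce to multiplicativity on the spanning set $\{F(P)\}$ via \cite[Proposition 1.1]{BL}, use Ehrenborg's identity $F(P\times Q)=F(P)F(Q)$, and apply Proposition \ref{gproduct}, then extend by bilinearity. No differences worth noting.
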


\begin{proof}
We need only check multiplicativity.
Suppose $G,H\in \QQ$, $G=\sum_i \alpha_i \thinspace F(P_i)$ and
$H=\sum_j \beta_j \thinspace F(Q_j)$.  Then
\begin{eqnarray*}
g(GH) &=& \sum_{i,j} \alpha_i\beta_j \thinspace g\left(F(P_i)F(Q_j)\right) \cr
	&=& \sum_{i,j} \alpha_i\beta_j \thinspace g\left(F(P_i\times Q_j)\right) \cr
	&=& \sum_{i,j} \alpha_i\beta_j \thinspace g(P_i,x)g(Q_j,x) \cr
	&=& g(G)g(H),
\end{eqnarray*}
by Proposition \ref{gproduct} and the fact that $g(F(P))=g(P,x)$.
\end{proof}

Restricted to $\Pi$, there is an explicit formula for $g$, due
essentially to Bayer and Ehrenborg \cite{BE}.  We
follow the development in \cite{BE} to express this.  Define
$p(n,k) = \binom{n}{k} - \binom{n}{k-1}$ and polynomials
$$Q_{n+1} = \sum_{k=0}^{\lfloor\frac{n}{2}\rfloor} (-1)^k p(n,k)x^k$$
for any $n$ and 
$$T_{n+1} = (-1)^{\frac n 2} p\left(n,\frac n 2\right)x^{\frac n 2}$$
for $n$ even.  Note that $Q_1=T_1=1$.

Say that a $\cd$-word $w$ is {\em even} if every element of
$S_w$ is even, that is, if 
$w = \c^{n_1}\d\c^{n_2}\d\cdots\c^{n_k}\d\c^{m}$, and
$n_1,\dots,n_k$ are all even.  The following is an interpretation of
\cite[Theorem 4.2]{BE} in our context.  It follows since $\Pi$ is spanned by
elements of the form $F(P)$, where $P$ is Eulerian.

\begin{proposition}
If $w = \c^{n_1}\d\c^{n_2}\d\cdots\c^{n_k}\d\c^{m}$, then
\begin{eqnarray*}
g(\Theta_w) =
\begin{cases}
2^{k+1}~x^k~Q_{m+1}~\prod\limits_{j=1}^k T_{n_j+1} & {\rm if~} w {\rm ~is~even,}\cr
0 & {\rm otherwise.}
\end{cases}
\end{eqnarray*}
\label{gcalc}
\end{proposition}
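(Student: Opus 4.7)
The plan is to translate Bayer and Ehrenborg's formula \cite[Theorem 4.2]{BE} from $\cd$-index coefficients to Stembridge-basis coefficients via Theorem \ref{cdtheta}. First, I would restate \cite[Theorem 4.2]{BE} in the present notation as
$$g(P,x) = \sum_w [w]_P\, A_w(x),$$
valid for Eulerian $P$, where $A_w(x)=x^k Q_{m+1}\prod_{j=1}^k T_{n_j+1}$ when $w=\c^{n_1}\d\c^{n_2}\d\cdots\d\c^m$ is even (all $n_j$ even, $k=|w|_\d$) and $A_w(x)=0$ otherwise. Extending the $\cd$-index to all of $\Pi$ via the remark following Proposition \ref{ksw}, define the linear map $\tilde g : \Pi \to \Q[x]$ by $\tilde g(F) = \sum_w [w]_F\, A_w(x)$.

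Both $g$ and $\tilde g$ are linear on $\Pi$, and they agree on every $F(P)$ with $P$ Eulerian. Since $\Pi$ is spanned by such $F(P)$ (the remark preceding Proposition \ref{gcalc}), we have $g = \tilde g$ on all of $\Pi$. Now apply this to $F=\Theta_w$. By Theorem \ref{cdtheta}, every $F\in\Pi_{n+1}$ has the unique expansion $F = \sum_u \frac{[u]_F}{2^{|u|_\d+1}}\Theta_u$, which in the case $F = \Theta_w$ forces $[u]_{\Theta_w}=2^{|u|_\d+1}\delta_{u,w}$. Substituting,
$$g(\Theta_w) = \sum_u [u]_{\Theta_w}\, A_u(x) = 2^{|w|_\d+1}A_w(x),$$
which is the claimed formula with $k=|w|_\d$.

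The only real work is verifying that the coefficient of $[w]_P$ in \cite[Theorem 4.2]{BE} matches $A_w(x)$ as spelled out above; both sides are expressed through $p(n,k)=\binom{n}{k}-\binom{n}{k-1}$, so this is a routine notational comparison rather than a conceptual obstacle. Note that the multiplicativity of $g$ from Corollary \ref{ghomo} is not used in the argument, though it does provide a sanity check on the base case $\Theta_{\bf 1}$, where the formula specializes to $g(\Theta_{\bf 1})=2$.
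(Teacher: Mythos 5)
Your proposal is correct and follows essentially the paper's own route: the paper justifies the proposition exactly by interpreting Bayer--Ehrenborg's Theorem 4.2 linearly, using that $\Pi$ is spanned by the $F(P)$ for Eulerian $P$, with Theorem \ref{cdtheta} supplying $[u]_{\Theta_w}=2^{|u|_\d+1}\delta_{u,w}$. You have simply written out in detail the linearity-and-spanning argument the paper compresses into one sentence.
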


\begin{remark}
Note that $g(\Theta_w)$ depends only on the initial and inter-peak distances
of the peak set indicated by $w$, but not on their order, vanishing when
any one of these is odd.  One could easily describe the kernel of the
$g$ map from this.  That $g$ is multiplicative on $\Pi$ is not evident
from the expression in Proposition \ref{gcalc}.
\end{remark}

\begin{remark}
Since the basis $\Omega_w$ is partially multiplicative, the images
$g(\Omega_w)$ should have a simpler expression than that of Proposition
\ref{gcalc}.  In particular, since $g(\Theta_{\bf 1})=1$, the calculation
of $g(\Omega_w)$ is determined entirely by the effect of the map $L^2$.
\end{remark}

\providecommand{\bysame}{\leavevmode\hbox to3em{\hrulefill}\thinspace}


\begin{thebibliography}{10}

\bibitem{Ag} M.\ Aguiar, Infinitesimal Hopf algebras and the $\cd$-index of
polytopes, {\it Discrete Comput. Geometry}
{\bf 27} (2002), 3-28.

\bibitem{BB1} M.M.\ Bayer and L.J.\ Billera, Counting faces and chains in
polytopes and posets, in {\it Combinatorics and Algebra}, C.\ Greene, ed.,
Contemporary Mathematics, vol.\ 34, Amer.\ Math.\ Soc., Providence, 1984. 

\bibitem{BB} M.M.\ Bayer and L.J.\ Billera, Generalized Dehn-Sommerville
relations for polytopes, spheres and Eulerian partially ordered sets,
{\it Inventiones Math.} {\bf 79} (1985) 143--157.

\bibitem{bk} M.M.\ Bayer and A.\ Klapper, {A new index for polytopes}, 
{\it Discrete Comput.\ Geometry} {\bf 6} (1991), 33--47.

\bibitem{BE} M.M.\ Bayer and R.\ Ehrenborg, The toric $h$-vector of partially
ordered sets, {\it Trans. Amer.\ Math.\ Soc.} {\bf 352} (2000), 4515--4531.

\bibitem{BaH} M.M.\ Bayer and G.\ Hetyei, Flag
vectors of Eulerian partially ordered sets, {\it Europ. J. Combinatorics}
{\bf 22} (2001), 5--26.

\bibitem{BaH2} M.M.\ Bayer and G.\ Hetyei, Generalizations of Eulerian
partially ordered sets, flag numbers, and the M\"obius function,
{\it Discrete Math.} (to appear).

\bibitem{Bell} R.\ Bellman, {\it Introduction to Matrix Analysis}, SIAM,
Philadelphia, 1995.

\bibitem{BMSV} N.\ Bergeron, S.\ Mykytiuk, F.\ Sottile
and S.\ van Willigenburg,
Non-commutative Pieri operators on posets, {\it J. Comb. Theory Ser. A}\ \
{\bf 91} (2000), 84-110.

\bibitem{BMSV2} N.\ Bergeron, S.\ Mykytiuk, F.\ Sottile
and S.\ van Willigenburg,
Shifted quasi-symmetric functions and the Hopf algebra of peak functions,
{\it Discrete Math.} {\bf 256} (2002), 57-66.

\bibitem{BHR} T.\ Bidigare, P.\ Hanlon and D.\ Rockmore, A combinatorial
	description of the spectrum for the Tsetlin library and its
	generalization to hyperplane arrangements, {\it Duke Math. Jour.}
	{\bf 99} (1999), 135--174.

\bibitem{BiBj} L.J.\ Billera and A.\ Bj\"orner,
Face numbers of polytopes and complexes,
{\it Handbook of Discrete and Computational Geometry}, J.E.~Goodman
and J.~O'Rourke, eds., CRC Press, Boca Raton and New York, 1997.

\bibitem{BiE} L.J.\ Billera and R.\ Ehrenborg,
Monotonicity of the $\cd$-index for polytopes, {\it Math. Z.} {\bf 233}
(2000), 421--441.

\bibitem{BER} L.J.\ Billera, R.\ Ehrenborg and M.\ Readdy,
The {\ctwod}-index of oriented matroids, {\it J. Comb. Theory Ser. A}
{\bf 80} (1997), 79--105.

\bibitem{BER2} L.J.\ Billera, R.\ Ehrenborg and M.\ Readdy,
The $\cd$-index of zonotopes and arrangements, {\it Mathematical Essays in
Honor of Gian-Carlo Rota}, B.E.\ Sagan and R.P.\ Stanley, eds.,
Birkh\"auser, Boston, 1998, pp. 23--40.

\bibitem{BH} L.J.\ Billera and G.\ Hetyei, Linear inequalities for
flags in graded posets, {\it J.\ Comb. Theory Ser. A} {\bf 89} (2000), 77--104.

\bibitem{BH2} L.J.\ Billera and G.\ Hetyei, Decompositions of partially
ordered sets, {\it Order} {\bf 17} (2000), 141--166.

\bibitem{BL} L.J.\ Billera and N. Liu, Noncommutative enumeration in graded
posets, {\it J. Alg. Combinatorics} {\bf 12} (2000), 7--24.

\bibitem{Ehr} R.\ Ehrenborg, On posets and Hopf algebras,
{\it Adv.\ in Math.} {\bf 119} (1996), 1--25.

\bibitem{ER} R.\ Ehrenborg and M.\ Readdy,
Coproducts and the $\cd$-index, {\it J. Alg. Combinatorics}
{\bf 8} (1998), 273--299.

\bibitem{GKL} I.\ M.\ Gel'fand, D.\ Krob, A.\ Lascoux, B.\ Leclerc,
V.\ Retakh and J.-Y.\ Thibon, Noncommutative symmetric functions,
{\it Adv.\ in Math.} {\bf 112} (1995), 218--348.

\bibitem{Ges} I..M.\ Gessel, Multipartite $P$-partitions and inner products
of Schur functions, in {\it Combinatorics and Algebra}, C.\ Greene, ed.,
Contemporary Mathematics, vol.\ 34, Amer.\ Math.\ Soc., Providence, 1984.

\bibitem{Hai} M.\ Haiman, Hilbert schemes, polygraphs, and the Macdonald positivity
conjecture, {\it Jour. Amer.\ Math.\ Soc.}, {\bf 14} (2001), 941--1006.


\bibitem{K} G.\ Kalai, {A new basis of polytopes}, {\it J.\ Comb. 
Theory Ser. A} {\bf 49} (1988), 191--208.

\bibitem{MR} C.\ Malvenuto and C. Reutenauer, Duality between quasi-symmetric
functions and the Solomon descent algebra, {\it Journal of Algebra} {\bf 177}
(1995), 967--982.

\bibitem{Read} N.\ Reading, Bases for the flag $f$-vectors of Eulerian posets,
preprint, 2000.

\bibitem{Stanb} R.\ Stanley, Balanced Cohen-Macaulay complexes, {\it Trans.
Amer.\ Math.\ Soc.} {\bf 249} (1979), 139--157.

\bibitem{SGLB}  R.\ Stanley, The number of faces of a simplicial convex polytope,
	{\it Adv.\ in Math.} {\bf 35} (1980), 236--238.

\bibitem{ECI} R.\ Stanley, {\it Enumerative Combinatorics, Vol. 1},
The Wadsworth \&  Brooks/Cole Mathematics Series, Monterey, California, 1986.

\bibitem{s-gh} R.\ Stanley, {Generalized $H$-vectors, intersection cohomology
of toric varieties, and related results,\/} {\it Adv.\ Stud.\ Pure Math.}
{\bf 11} (1987), 187--213.

\bibitem{SCD} R.\ Stanley, {Flag $f$-vectors and the $\cd$-index}, {\it 
 Math.\ Z.} {\bf 216} (1994), 483--499.

\bibitem{ECII} R.\ Stanley, {\it Enumerative Combinatorics, Vol. 2}, Cambridge
Studies in Advanced Mathematics, Vol. 62, Cambridge University Press, Cambridge, UK,
1999.

\bibitem{Stem} J.\ Stembridge, Enriched $P$-partitions,
{\it Trans. Amer.\ Math.\ Soc.} {\bf 349} (1997), 763--788.

\bibitem{Zas} T.\ Zaslavsky, {\it Facing up to arrangements: Face count formulas
for partitions of space by hyperplanes}, {Memiors Amer. Math. Soc.},
{no. 154}, American Mathematical Society, Providence, RI, 1975.



\end{thebibliography}
\end{document}